\documentclass[11pt]{article}

\usepackage[a4paper,margin=1in]{geometry}
\usepackage{amsmath,amssymb,amsthm,mathtools}
\usepackage{enumitem}
\usepackage{graphicx}
\usepackage[colorlinks=true,allcolors=blue]{hyperref}
\usepackage{todonotes}

\newtheorem{theorem}{Theorem}[section]
\newtheorem{lemma}[theorem]{Lemma}

\newtheorem{corollary}[theorem]{Corollary}

\title{Dense Minors from the Crux}
\author{
Xia Wang\thanks{School of Mathematics, Shandong University, Jinan, China and Extremal Combinatorics and Probability Group (ECOPRO), Institute for Basic Science (IBS), Daejeon, South Korea. Supported by China Scholarship Council and  the Institute for Basic Science (IBS-R029-C4). Email: {\tt xiawang@mail.sdu.edu.cn}.}
\and
Donglei Yang\thanks{School of Mathematics, Shandong University, Jinan, China. Email: {\tt dlyang@sdu.edu.cn}. Supported by Natural Science Foundation of China (12571374).}
}
\date{}

\begin{document}
\maketitle

\begin{abstract} Let $G$ be a graph with average degree $d>0$ and $C_\alpha(G)$ be the minimum order of a subgraph whose average degree is at least $\alpha d$ for any fixed $0<\alpha<1$. Then $G$ has a minor of average degree $\Omega(\sqrt{d\,C_\alpha(G)})$, which is asymptotically tight up to a constant factor.
This reveals a unified mechanism underlying previous results on dense minors in locally sparse graphs.
\end{abstract}

\section{Introduction}
We say that a graph $G$ contains an \emph{$H$-minor}
if there exists a family $\{T_v:v\in V(H)\}$ of pairwise disjoint nonempty subsets of $V(G)$ such that $G[T_v]$ is connected for every $v\in V(H)$, and $e_G(T_u,T_v)>0$ whenever $uv\in E(H)$.
The sets $T_v$ are called the \emph{branch sets}
of the minor. When forming a minor, loops are
deleted and parallel edges are replaced by
single edges.

The study of graph minors is a central topic in structural and
extremal graph theory. In 1943, Hadwiger~\cite{Hadwiger1943}
conjectured that every graph with chromatic number at least $t$
contains a $K_t$-minor. The cases $t\le6$ follow from classical
results, including the reductions of Wagner~\cite{Wagner1937} and that of
Robertson, Seymour and Thomas~\cite{RobertsonSeymourThomas1993}
to the Four-Color Theorem. 
An important extremal counterpart asks how large
the average degree of a graph must be to force
a $K_t$-minor. 
For sufficiently large $n$, the sharp extremal
bounds for $K_t$-minor-free graphs are known
for several small values of $t$.
For $t=6,7$, Mader~\cite{Mader} proved that
every $n$-vertex graph without a $K_t$-minor
has at most $(t-2)n-\binom{t-1}{2}$ edges.
For $t=8$, J{\o}rgensen~\cite{Jorgensen}
established the sharp upper bound $6n-20$,
with equality only for graphs obtained
by gluing copies of $K_{2,2,2,2,2}$
along $K_5$.
For $t=9$, Song and Thomas~\cite{SongThomas}
proved the sharp upper bound $7n-27$,
with equality only for $K_{2,2,2,3,3}$
and graphs obtained by gluing copies of
$K_{1,2,2,2,2,2}$ along $K_6$.
For large $t$, Kostochka~\cite{Kostochka}
and Thomason~\cite{Thomason} established
that the sharp order of this threshold is
$t\sqrt{\log t}$.
In particular, every graph without a $K_t$-minor is
$O(t\sqrt{\log t})$-colorable.

Substantial progress on the coloring problem has been made in
recent years. Norin, Postle and Song~\cite{NorinPostleSong2023}
proved that $K_t$-minor-free graphs are
$O(t(\log t)^\beta)$-colorable for every fixed $\beta>1/4$.
Delcourt and Postle~\cite{DelcourtPostle2025} improved this to $O(t\log\log t)$ colors and reduced the linear Hadwiger conjecture to the case of graphs of relatively small order.
Recently, Liu and Luo~\cite{LiuLuo2026} obtained the bound $O(t\log\log\log t)$. Norin and
Steiner~\cite{NorinSteiner2026} presented an AI-assisted proof of the linear Hadwiger conjecture, showing that every $K_t$-minor-free graph is $Ct$-colorable for some absolute constant $C$. Wood~\cite{Wood2026} subsequently improved
this constant using AI-assisted optimization. 
An OpenAI-generated preprint~\cite{OpenAIHadwiger2026} proposed counterexamples to Hadwiger's original conjecture, even among graphs with independence number at most two. 

A related extremal question asks for minors of large average degree, without requiring them to be complete. For instance, Hendrey, Norin, Steiner and Turcotte~\cite{HendreyNorinSteinerTurcotte2025} proved that for sufficiently large $t$, every graph of average degree at least $t-1$ contains a minor on $t$ vertices and at least $(\sqrt2-1-o(1))\binom t2$ edges.
In a different direction, Norin and Seymour~\cite{NorinSeymour2026}
showed that every graph with independence number at most two has a minor whose order equals the chromatic number of the graph and whose edge density is at least $0.98688-o(1)$.
These results illustrate that dense minors are natural objects even when clique minors of the same order are not guaranteed.

For a graph $G$ with $d(G)>0$ and
$0<\alpha<1$, let \emph{$\alpha$-crux} $C_\alpha(G)$
be the minimum order of a subgraph $H\subseteq G$ satisfying $d(H)\ge\alpha d(G)$. This parameter was first considered by Liu and Montgomery~\cite{LiuMontgomery2017} in their study of clique subdivisions and was later named the \emph{crux} by Haslegrave, Hu, Kim, Liu, Luan and Wang~\cite{HaslegraveEtAl}. Haslegrave et al.~\cite{HaslegraveEtAl} established results on long cycles in terms of the crux. Im, Kim, Kim and Liu~\cite{ImKimKimLiu}
subsequently obtained bounds for clique subdivisions in terms of the crux.
More recently, Yang and Yang~\cite{YangYang}
confirmed a conjecture of Liu and Montgomery,
showing that, for a sufficiently small fixed
$\alpha>0$, every graph $G$ contains a clique
subdivision of order $\Omega\!\left(\min\{d(G),
\sqrt{C_\alpha(G)/\log C_\alpha(G)}\}\right)$.

Complete graphs show that large average degree alone
does not force a minor of substantially larger average
degree. In contrast, stronger bounds are known for
graphs with forbidden bipartite subgraphs.
Krivelevich and Sudakov~\cite[Theorems~4.5 and~4.7]{KrivelevichSudakov2009} proved that, for fixed integers $2\le s\le t$, every $K_{s,t}$-free graph of average degree $d$ contains a minor of average degree
$\Omega_{s,t}(d^{1+1/(2(s-1))})$, while every $C_{2k}$-free graph with fixed $k\ge2$ contains a minor of average degree $\Omega_k(d^{(k+1)/2})$.
Norin, Postle and Song~\cite[Theorem~5.3]{NorinPostleSong2023} extended such bounds to general forbidden bipartite graphs in terms of their Tur\'an exponents.

A common feature of these examples is that any subgraph retaining a fixed proportion of the average degree must have many vertices. The crux provides a quantitative measure of this phenomenon and naturally leads to the question of whether every graph $G$ of average degree $d>0$ contains a minor of average degree $\Omega(\sqrt{d\,C_\alpha(G)})$ for each fixed $0<\alpha<1$. This question was also raised by Hong Liu in private discussions. Our main theorem answers it affirmatively.

\begin{theorem}\label{thm:crux}
For every $0<\alpha<1$, there exists
$c_\alpha>0$ such that every graph $G$ with
$d(G)>0$ contains a $J$-minor with
\[
d(J)\ge c_\alpha\sqrt{d(G)C_\alpha(G)}.
\]
Moreover, $c_\alpha$ can be chosen to be the same
positive constant for all $0<\alpha\le1/2$.
\end{theorem}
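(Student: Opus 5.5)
We plan to pass first to a well-structured subgraph, then build the minor by contracting small connected pieces.

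\textbf{Step 1: reduction to a minimal subgraph.} Let $d=d(G)$ and $C=C_\alpha(G)$. Choose a subgraph $H\subseteq G$ with $d(H)\ge \alpha' d$ that is minimal with respect to a slightly stronger density condition. Here $\alpha'$ is a constant between $\alpha$ and $1$, for instance $\alpha'=(1+\alpha)/2$, or $\alpha'=\max\{\alpha,\,3/4\}$-type choices when $\alpha\le 1/2$ so that the constant is uniform. A standard minimality argument then gives two properties:
\begin{itemize}[nosep]
\item every vertex of $H$ has degree at least $\alpha' d/2$;
\item every proper subgraph of $H$ on at most $|H|/2$ vertices has average degree below the threshold.
\end{itemize}
Since $d(H)\ge \alpha d$, we have $|H|\ge C$. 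Moreover, every subgraph of $H$, being a subgraph of $G$, with average degree at least $\alpha d$ has at least $C$ vertices. If $d\ge \sqrt{dC}$, that is $C\le d$, then $G$ itself is the desired minor. So we assume $C\ge d$, and the target becomes $\sqrt{dC}\ge d$.

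\textbf{Step 2: local expansion.} The key consequence we want is that small sets expand in $H$. Suppose $U\subseteq V(H)$ has $|U|\le \varepsilon C$. Then $H[U]$ has average degree less than $\alpha d$, so at least a constant fraction of the degree sum of $U$, namely $\ge (\alpha'-\alpha)d|U|/2$ minus small terms, goes outside $U$. Similarly, any set $W$ with $|W|\le \varepsilon C$ spans few edges. Hence a typical vertex set of size $s\le\varepsilon C$ sends $\Omega(sd)$ edges to the rest of the graph, landing on $\Omega(sd)/\max\deg$ distinct vertices. To control maximum degree, we first handle the case of many high-degree vertices separately: vertices of degree $\ge \sqrt{dC}$ already allow star contractions. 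Using this, we aim to show that the ball of radius $2$ around a vertex, or a BFS tree grown suitably, reaches $\Omega(\min\{d^2, C\})$ vertices.

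\textbf{Step 3: building branch sets.} We take $m$ disjoint connected branch sets, each of size about $k=\sqrt{C/d}$. We form them greedily as small trees, or stars joined by a path, so that their union has size about $mk\approx |H|$. Each branch set has edge boundary of order $kd$. Since sets of size $\le \varepsilon C$ span few edges, the edges leaving a branch set must spread. A union of $t$ branch sets with $tk\le \varepsilon C$ spans fewer than $\alpha d tk/2$ edges. So most boundary edges of a branch set $T$ go to branch sets outside any small collection, meaning $T$ is adjacent to many distinct branch sets. Making this quantitative, we want the quotient graph $J$ to have average degree $\Omega(kd)=\Omega(\sqrt{dC})$, provided $kd\le\varepsilon C/k$, i.e.\ $k^2\le \varepsilon C/d$, which is why we chose $k=\sqrt{\varepsilon C/d}$. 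The count would go as follows. Suppose $T$ had fewer than $\delta kd$ neighbouring branch sets. The union $U$ of $T$ and those neighbours has size $\le \delta k^2 d\le\varepsilon C$, yet spans at least $\alpha' kd/4$ edges incident to $T$. This contradicts the sparsity of small sets only on average, so we instead sum over all branch sets and apply the sparsity to carefully chosen random unions.

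\textbf{Main obstacle.} The hardest step is Step 3: partitioning $H$ into connected pieces of prescribed size $k$ while keeping almost all edges between pieces, and turning ``small sets are sparse'' into many distinct neighbouring pieces. This must hold for most pieces, not just for $T$ together with its neighbours as a single small set. The first approach to try is a random partition into pieces. We would take a random set $S$ of centres with density $1/k$ and attach every other vertex to a neighbouring centre when possible, using the expansion from Step 2 to ensure pieces of size about $k$ that are connected via radius-$2$ paths. We then bound, in expectation, the number of edges falling inside pieces or between piece pairs with multiplicity, comparing against the at most $O(Cd)$ edges allowed inside sets of size $\varepsilon C$. Uniformity of $c_\alpha$ for $\alpha\le1/2$ follows because $C_\alpha$ is monotone in $\alpha$, so we may run the argument with $\alpha=1/2$ and a fixed $\alpha'$.
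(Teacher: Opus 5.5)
There is a genuine gap: the two steps that carry the theorem are missing.

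First, the structure you extract in Step~1 is too weak. Combining $d(H)\ge\alpha'd$, minimality ($d(H[Z])<\alpha'd$ for $Z=V(H)\setminus X\neq\varnothing$) and the crux ($d(H[X])<\alpha d$) does give $2e_H(X,Z)>(\alpha'-\alpha)d|X|$, but only for $|X|<C$. Nothing in your setup bounds $|H|/C$. For $C\le|X|\le|H|/2$ the same computation yields only $e_H(X,Z)>0$, so $H$ may have very sparse cuts at large scales. You also do not control $\Delta(H)$, and the remark about ``star contractions'' at vertices of degree $\ge\sqrt{dC}$ is not an argument: high degree alone yields no dense minor. The expander-to-minor step you need (the paper's Lemma~\ref{lem:crux-cheeger}, after Li--Xu--Zhou) requires edge expansion of all sets up to half the graph together with a maximum-degree bound.

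The paper's key device is to replace minimality by choosing an induced $H_1$ maximizing $d(H_1)\mathrm e^{-\lambda(|H_1|)}$, with $\lambda$ increasing slowly in $|H_1|/C$. Comparing $H_1$ with $H_1[X\cup Y]$ and $H_1[Y\cup Z]$ then forces $e_{H_1}(X,Z)\ge\frac32\eta d'|X|$ for \emph{all} $|X|\le 3h_1/4$, where $\eta\asymp\log^{-2}(\mathrm e+h_1/C)$. This holds robustly under deleting any $Y$ with $|Y|\le\eta|X|$. That robustness lets one discard the few vertices of degree above $16d'/\eta$ and peel off sparse cuts, giving $H_2$ with $\Delta(H_2)=O(d'/\eta)$ and $h(H_2)\ge\eta d'$. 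The polylogarithmic loss is then absorbed: the minor obtained has average degree $\gtrsim\eta^{16}\sqrt{h_1d}$, and $\sqrt{h_1/C}$ dominates $\log^{32}(\mathrm e+h_1/C)$.

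Second, Step~3 is not an argument, and the proposed fallback fails as stated. Local sparsity bounds edges inside unions of pieces, not the multiplicities $e(T_i,T_j)$. The union of $T$ with its $r$ neighbouring pieces has $(r+1)k$ vertices, which is below $C$ in your regime. It contains the at least $(\alpha'-\alpha)dk/2$ boundary edges of $T$, and comparing with $e<\alpha d(r+1)k/2$ only shows $r>(\alpha'-\alpha)/\alpha-1$.

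The random-centre clustering also fails on scale. You need connected pieces of size $k=\sqrt{\varepsilon C/d}$, while radius-$2$ clusters have $O(\Delta^2)$ vertices. For $G=Q_m$ we have $d=m$, every subgraph has maximum degree at most $m$, and $C_\alpha(G)\ge2^{\alpha m}$. So $k$ is exponential in $m$, and a typical vertex has no centre within distance $2$.

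The paper replaces all of this by Lemma~\ref{lem:crux-cheeger}. In a graph with $\Delta\le D$ and $h\ge\varepsilon D$, one finds disjoint connected sets $A_i$ of size at most $\ell\approx\sqrt{n/D}$ with robust outside neighbourhoods $Q_i$ of size $\Omega(D\ell)$. One then builds connected sets $T_j$ of size $O(\ell)$ one at a time via a hitting lemma, each inside a re-extracted expanding core avoiding the earlier sets. Each $T_j$ meets a constant fraction of the $Q_i$, so adjacency of each $T_j$ to many distinct $A_i$ is built in and no multiplicity count is needed.

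Your reduction to $C\ge d$ is fine, as is the reduction to $\alpha=1/2$ for the uniformity claim via monotonicity of $C_\alpha$ in $\alpha$.
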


The bound in Theorem~\ref{thm:crux} is asymptotically tight up to a constant factor, as witnessed by complete graphs. Indeed, for $G=K_n$, we have $d(G)=n-1$ and $C_\alpha(G)=\Theta_\alpha(n)$, while every minor of $G$ has average degree at most $n-1$.
Moreover, Norin, Postle and Song~\cite[Theorem~2.4]{NorinPostleSong2023} proved that, for every fixed $\delta>0$, a graph of sufficiently large average degree $d$ either contains a minor of average degree at least $D>d$, or has a subgraph $H$ with $|H|=O_\delta((D/d)^\delta D^2/d)$ and $d(H)=\Omega_\delta((D/d)^{-\delta}d)$. Our Theorem~\ref{thm:crux} strengthens this result by showing that, for every $d>0$ and $D>d$, either such a minor with average degree $D$ exists or there is a subgraph $H$ with $|H|=O(D^2/d)$ and $d(H)\ge d/2$, where the implied constant is absolute.

In Section~\ref{sec:applications}, we recover
the forbidden subgraph bounds mentioned above
and present further applications to subgraphs
of Hamming graphs and to pseudorandom graphs.

\paragraph{Organization of the paper.}
The rest of the paper is organized as follows.
In Section~\ref{sec:prelim}, we introduce the notation
and definitions used throughout the paper.
In Section~\ref{sec:crux}, we prove Theorem~\ref{thm:crux}.
In Section~\ref{sec:applications}, we present applications
to graphs with forbidden subgraphs, subgraphs of Hamming
graphs, and pseudorandom graphs.

\section{Notation}\label{sec:prelim}

Let $G$ be a finite simple undirected graph with
vertex set $V(G)$ and edge set $E(G)$.
Write $|G|:=|V(G)|$ and $e(G):=|E(G)|$.
The \emph{average degree} of $G$ is
$d(G):=2e(G)/|G|$ when $|G|\ge1$.
Let $\delta(G)$ and $\Delta(G)$ denote the minimum
and maximum degrees of $G$, respectively, and $d_G(v)$ denote the degree of a vertex $v$.
For a subset $X\subseteq V(G)$, let $G[X]$
denote the induced subgraph of $G$ on $X$,
and $G-X:=G[V(G)\setminus X]$.
For any two disjoint subsets $X,Y\subseteq V(G)$,
let $G[X,Y]$ denote the bipartite subgraph
of $G$ with parts $X$ and $Y$, and $e_G(X,Y)$ denote the number of edges between $X$ and $Y$.
The \emph{external neighborhood} of $X$ is
\[
N_G(X):=\{v\in V(G)\setminus X:
uv\in E(G)\text{ for some }u\in X\}.
\]
We write $A_1\mathbin{\dot\cup}\cdots
\mathbin{\dot\cup}A_k$ for the union of
pairwise disjoint sets $A_1,\ldots,A_k$.
All logarithms are natural, and $[m]:=\{1,\ldots,m\}$
for every positive integer $m$.
For a graph $G$ with $|G|\ge2$, the
\emph{Cheeger constant} of $G$ is defined as
\[
h(G):=
\min_{\substack{\varnothing\ne X\subseteq V(G)\\
|X|\le |G|/2}}
\frac{e_G(X,V(G)\setminus X)}{|X|}.
\]
This is the edge expansion parameter used in
\cite{KrivelevichNenadov,LiXuZhou},
which we use throughout the proof of Theorem~\ref{thm:crux}.

\section{Proof of Theorem \ref{thm:crux}}
\label{sec:crux}

We first establish the auxiliary lemmas needed for
Theorem~\ref{thm:crux} and then complete its proof.
The following lemma is a quantitative variant of
\cite[Lemma~4.1(ii)]{LiXuZhou}. While their result leaves
the dependence on the expansion parameter $\varepsilon$
implicit, we obtain a uniform bound with polynomial
dependence on $\varepsilon$. This allows the constant in
Theorem~\ref{thm:crux} to depend only on $\alpha$.
The proof follows their approach and is given in
Appendix~\ref{app:crux-cheeger}.

\begin{lemma}\label{lem:crux-cheeger}
There exists an absolute constant $a>0$ such that
the following holds.
Let $G$ be an $n$-vertex graph with $n\ge2$,
let $D\ge1$ be an integer, and let
$0<\varepsilon\le1$.
If $\Delta(G)\le D$ and $h(G)\ge\varepsilon D$,
then $G$ contains a $J$-minor satisfying $d(J)\ge a\varepsilon^8\sqrt{nD}$.
\end{lemma}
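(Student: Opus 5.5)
We will build a minor with about $k\approx \sqrt{n/D}$ branch sets, each of order about $\sqrt{nD}$, such that a constant fraction of the pairs of branch sets are adjacent. That gives a minor of average degree $\Omega(k)$. This is the wrong order, so we recalibrate: we want $d(J)\approx\sqrt{nD}$. We therefore take $t\approx \sqrt{n/D}\cdot\mathrm{poly}(\varepsilon)^{-1}$ branch sets of order $s\approx\sqrt{n/D}$. Each such set has edge boundary about $\varepsilon D s\approx \varepsilon\sqrt{nD}$, and it is the boundary that produces adjacencies. Concretely, the minor $J$ has $m\approx \varepsilon n/s$ branch sets, each connected of order at most $s=\lceil\sqrt{n/D}\,\rceil$ (more precisely, a small multiple of it). We aim for $e(J)\gtrsim \mathrm{poly}(\varepsilon)\, m\sqrt{nD}$, so that $d(J)\gtrsim \mathrm{poly}(\varepsilon)\sqrt{nD}$. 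Note $\sqrt{nD}\le n$ when $D\le n$, so this is consistent with $m\approx\sqrt{nD}$.

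First, from $h(G)\ge\varepsilon D$ and $\Delta\le D$ we derive that $G$ is a good vertex expander at small scales: every $X$ with $|X|\le n/2$ has $|N_G(X)|\ge e(X,\overline X)/D\ge \varepsilon|X|$. Removing a set $W$ of at most $\varepsilon^2 n$ vertices preserves a large subgraph with expansion $\Omega(\varepsilon)$, by the standard robustness argument used in \cite{LiXuZhou}. It follows that any two sets of size $\Omega(\varepsilon n)$ are joined by a path of length $O(\varepsilon^{-1}\log n)$ avoiding $W$. The logarithm is the danger, and it is what must be avoided to get $\sqrt{nD}$ rather than $\sqrt{nD}/\log n$. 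For this we use edge expansion more strongly. Any set $X$ with $|X|\le n/2$ has $\ge\varepsilon D|X|$ boundary edges. Hence a BFS ball grows by a factor of about $\varepsilon D/D$ per step measured in vertices, but by about $\varepsilon D$ per step measured in edges, which is the quantity we count.

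Second, we build the branch sets greedily, following \cite[Lemma~4.1]{LiXuZhou}. We repeatedly grow a connected set $T_i$ in $G-\bigcup_{j<i}T_j$ by BFS until $|T_i|\approx s$, stopping after $m=\Theta(\varepsilon^2 n/s)$ sets so that the used vertices total at most $\varepsilon^2 n$. By the robust expansion, each $T_i$ at creation sends at least $\Omega(\varepsilon)Ds$ edges outward. Choosing $T_i$ to be sets of small boundary into the used region, or contracting randomly, we argue that a constant fraction of these edges go to other branch sets. The alternative is that they land in the unused remainder $R$. To handle this, we partition all of $V(G)$ (not just $\varepsilon^2 n$ vertices) into connected pieces of order between $s$ and $Ds$. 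We do this via a BFS-tree decomposition of a spanning tree: any tree with maximum degree $D$ splits into subtrees of sizes in $[s,Ds]$. Each piece then gets a full cut of $\ge\varepsilon D|T|$ edges, all going to other pieces. Summing and dividing by the multiplicity $D$ of parallel edges is the crux.

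The main obstacle is precisely parallel edges. A piece $T$ of size $s$ has $\varepsilon Ds$ boundary edges, and we need them to hit $\Omega(\mathrm{poly}(\varepsilon)Ds)$ distinct other pieces. With $s\approx\sqrt{n/D}$, we have $Ds\approx\sqrt{nD}$, which is comparable to the total number of pieces $n/s\approx\sqrt{nD}$. So each piece must see a constant fraction of all others. The plan to overcome this is to make the partition random. We take a random set of $\approx n/s$ roots and let pieces be Voronoi-type cells grown in parallel, with sizes controlled via expansion. We then show that for a typical piece the boundary edges spread, since $e(T,T')$ large for many $T'$ would produce a set $T\cup\bigcup T'$ of size $\le n/2$ with too few boundary edges, contradicting $h(G)\ge\varepsilon D$. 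Iterating this contradiction through $O(1)$ scales loses factors of $\varepsilon$, which accounts for the $\varepsilon^8$. I expect this spreading/anti-concentration step to be the hardest part to make rigorous.
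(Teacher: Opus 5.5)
There is a genuine gap, and it sits exactly at the step you flag as hardest. Everything before that step is either standard or does not advance the proof, and the mechanism you propose for it does not work. You correctly note that with branch sets of order $s\approx\sqrt{n/D}$ there are only about $\sqrt{nD}$ of them, so each must be adjacent to a $\mathrm{poly}(\varepsilon)$-fraction of all the others. But edge expansion only bounds the \emph{total} boundary of a set. It says nothing about how the $\varepsilon D|T|$ boundary edges of a piece $T$ are distributed among the other pieces, so the contradiction you hope to extract from $T\cup\bigcup T'$ is not there.

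Concretely, let $G$ be obtained from a $3$-regular expander $H$ on $N\gg b^2$ vertices by replacing each vertex with a clique (a blob) of size $b$ and each edge with $K_{b,b}$. Then $G$ is $(4b-1)$-regular with spectral gap $b(3-\lambda_2(H))$. By the easy direction of Cheeger's inequality, $h(G)\ge\varepsilon D$ with $D=4b$ and $\varepsilon$ an absolute constant, while $n=Nb$, $\sqrt{nD}=2b\sqrt N$ and $s\approx\sqrt N/2$. A branch set meeting $\beta$ blobs has all its neighbours in at most $4\beta$ blobs, so it is adjacent to at most $4\beta b$ other branch sets. To see $\Omega(b\sqrt N)$ of them, a set of order $O(\sqrt N)$ must meet $\Omega(\sqrt N)$ distinct blobs, i.e.\ it must be \emph{thin}. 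Balls and Voronoi cells grown from roots essentially consist of whole blobs (vertices in a blob are twins, so they differ only by tie-breaking). Such a cell meets $O(s/b)$ blobs and hence sees only $O(s)=O(\sqrt{n/D})$ other cells, which is a factor $\Theta(D)$ short of what you need. No randomisation of compact cells repairs this. Your spanning-tree decomposition has a further defect: pieces of order up to $Ds\approx\sqrt{nD}$ may leave only about $\sqrt{n/D}$ branch sets, which caps $d(J)$ at that value.

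The missing idea is to use thin branch sets together with a hitting argument that avoids the $\log n$ loss. The paper applies Lemma~\ref{lem:lxz-family} with $\ell=\lceil\sqrt{n/D}\,\rceil$. This gives $m\ge\xi n/\ell$ disjoint connected sets $A_i$ of order at most $\ell$, whose external neighbourhoods contain sets $Q_i$ with $|Q_i|\ge cD\ell$. These are robust: deleting any $\zeta n$ vertices leaves at least $15m/16$ of them with size at least $cD\ell$. In the example above, such an $A_i$ is a path of $H$ using one vertex per blob.

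The paper then builds $k=\lfloor\theta m\rfloor$ further disjoint connected sets $T_j$ of order at most $C\ell=O(\varepsilon^{-4}\ell)$, each meeting at least $45m/64$ of the $Q_i$. Each $T_j$ comes from Lemma~\ref{lem:lxz-hitting} with $w=cD\ell$: a connected set of order $t=O(\varepsilon^{-4}n/(D\ell))$ already misses at most a quarter of the sets, so no logarithm appears. The lemma is applied inside an expanding remainder $U_j$ that avoids $X\cup T_1\cup\dots\cup T_{j-1}$ and is maintained by Lemma~\ref{lem:lxz-core}.

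Contracting the two families gives a bipartite minor with $e(J)\ge45mk/64$. Hence $d(J)\ge45k/64\ge(45\theta\xi/256)\sqrt{nD}$ with $\theta\xi=\Theta(\varepsilon^8)$; this product, not an iteration over scales, is the source of $\varepsilon^8$.

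Finally, your plan does not treat two other regimes. For $D<K\varepsilon^{-8}$, Lemma~\ref{lem:lxz-family} is unavailable, and the paper instead uses the Kawarabayashi--Reed separator (Lemma~\ref{lem:cheeger-sqrtn}) to find a clique minor of order $\Omega(\varepsilon\sqrt n)$. For $n<16D$ the case is trivial, since $J=G$ works.
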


The next crucial lemma finds an induced subgraph with controlled
maximum degree and Cheeger constant while retaining
average degree comparable to that of the original graph, which is of independent interest.

\begin{lemma}\label{lem:crux-extract}
Let $0<\alpha<1$ and
$\tau:=\min\{1/4,-(\log\alpha)/2\}$.
For every $n$-vertex graph $G$ with $d:=d(G)>0$,
there exists an induced subgraph $H_1$ with
$h_1:=|H_1|\ge C_\alpha(G)$ and
$d':=d(H_1)\ge\mathrm e^{-\tau}d$ satisfying the
following properties, where
$L:=\log(\mathrm e+h_1/C_\alpha(G))$
and $\eta:=\tau/(32(\mathrm e+1)L^2)$.

\begin{enumerate}[label=\rm(\roman*)]
\item
For every partition
$V(H_1)=X\mathbin{\dot\cup}Y\mathbin{\dot\cup}Z$
with $0<|X|\le3h_1/4$ and $|Y|\le\eta|X|$,
we have $e_{H_1}(X,Z)\ge(3/2)\eta d'|X|$.

\item
There exists an induced subgraph $H_2$ of $H_1$
with $h_2:=|H_2|>(3/4-\eta/16)h_1\ge h_1/2$,
$\Delta(H_2)\le16d'/\eta$, and
$h(H_2)\ge\eta d'$.
\end{enumerate}

Moreover, $d'\ge1$ and $h_2\ge2$.
\end{lemma}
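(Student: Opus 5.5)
We choose $H_1$ by a potential-function minimization. For every nonempty $U\subseteq V(G)$ with $|U|\ge C_\alpha(G)$ we set
\[
\Phi(U):=d(G[U])\cdot \exp\!\Bigl(-\tfrac{\tau}{L_U}\Bigr),\qquad L_U:=\log\bigl(\mathrm e+|U|/C_\alpha(G)\bigr).
\]
We take $H_1:=G[U]$ with $U$ maximizing $\Phi$, breaking ties by smallest $|U|$. Taking $U=V(G)$ gives $\Phi\ge d\,\mathrm e^{-\tau}$. Since $\mathrm e^{-\tau/L_U}\le 1$, we get $d'\ge \mathrm e^{-\tau}d$. The choice $\tau\le -(\log\alpha)/2$ gives $\mathrm e^{-\tau}\ge\sqrt\alpha>\alpha$. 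The restriction $|U|\ge C_\alpha(G)$ is therefore natural. Moreover, any subgraph with fewer than $C_\alpha(G)$ vertices has average degree below $\alpha d<\mathrm e^{-\tau}d$, so it can never compete with $H_1$. This lets us compare $H_1$ with all of its subsets, including small ones, using the bound $d(G[W])<\alpha d$ when $|W|<C_\alpha$. The bound $d'\ge1$ holds because $d'>0$ forces an edge. One then checks $d'\ge\mathrm e^{-\tau}d\ge \mathrm e^{-1/4}d$, and if $d<1$ we still have $d'\ge1$, since any graph with an edge contains $K_2$ of degree $1$. This may require choosing the maximizer appropriately.

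For (i), suppose a partition $X\,\dot\cup\,Y\,\dot\cup\,Z$ violates the inequality. Consider $U':=X\cup Y$, of size at most $(1+\eta)\tfrac34 h_1<h_1$. Its edge count is at least $e(H_1)-e(Z\cup Y)$, and the standard computation compares $e(X\cup Y)$ and $e(Y\cup Z)$. Using $e(X\cup Y)+e(Y\cup Z)\ge e(H_1)-e_{H_1}(X,Z)$, one of $X\cup Y$ or $Y\cup Z$ has average degree at least $d'(1-O(\eta))$ relative to its size. Both sets are smaller by a factor bounded away from $1$; for $Y\cup Z$ this uses $|X|>0$ and, quantitatively, the relative size loss $|X|/h_1$. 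The potential gain from shrinking $|U|$ by a factor $(1-\theta)$ is roughly $\exp(\tau\theta/(L^2\cdot O(1)))$, because $\frac{d}{du}(-\tau/L)$ behaves like $\tau/(L^2(\mathrm e+u))$. This gain exceeds the density loss $O(\eta\theta)$ exactly when $\eta\approx\tau/(32(\mathrm e+1)L^2)$. That contradicts the maximality of $\Phi$ or the minimality tie-break. The main technical case is when $X\cup Y$ has fewer than $C_\alpha$ vertices. There we instead use $e(X\cup Y)<\alpha d|X\cup Y|/2$, which pushes almost all edges into $Y\cup Z$ and gives the same contradiction.

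For (ii), we first delete high-degree vertices. The set $B$ of vertices of degree greater than $16d'/\eta$ has $|B|\le \eta h_1/16$. We then iteratively remove from $H_1-B$ any set $X$ with $|X|\le$ half of the current size and boundary less than $\eta d'|X|$, accumulating the removed set $S$. As long as $|S|\le 3h_1/4$, property (i) applied with $X=S$ and $Y=B$ (note $|B|\le\eta|S|$ fails if $S$ is tiny; to handle this, include $B$'s contribution as $e(S,B)\le$ small, or apply (i) only once $|S|\ge|B|/\eta$, or bound via the degree cap) gives $e(S,Z)\ge \tfrac32\eta d'|S|$. Meanwhile, the peeling process guarantees $e(S,Z)<\eta d'|S|$, which is a contradiction. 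Hence the process stops with $|S|<(3/4-\eta/16)h_1$ roughly, so $h_2>(3/4-\eta/16)h_1$. The final graph has $h(H_2)\ge\eta d'$ and $\Delta\le16d'/\eta$, and $h_2\ge2$ follows from $h_2\ge h_1/2$ together with $h_1\ge C_\alpha\ge2$. I expect the main obstacle to be the calculus in (i), that is, matching the derivative of the $L^{-1}$ penalty with $\eta$, together with the interaction between $B$ and small peeled sets in (ii).
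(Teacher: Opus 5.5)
\textbf{The potential has the wrong monotonicity.} Your weight $\exp(-\tau/L_U)$ is increasing in $|U|$, so $\Phi$ rewards \emph{large} sets. The ``gain from shrinking $|U|$'' you invoke in (i) is therefore a loss, and nothing prevents a disconnected $H_1$. Your heuristic calculus in (i) is what one gets with the \emph{opposite} sign.

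Concretely, let $G$ be two disjoint copies of $K_{k+1}$. Every $U$ has $d(G[U])\le k$, and $L_U<L_{V(G)}$ for $U\ne V(G)$, so your maximizer is $H_1=G$. Taking $X$ to be one copy and $Y=\varnothing$ violates (i). Moreover, no induced $H_2$ with $h_2>h_1/2$ has $h(H_2)>0$, so (ii) fails too.

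The weight must decrease with size. The paper maximizes $d(H)\mathrm e^{-\lambda(|H|)}$ with $\lambda(x)=\tau\bigl(1-1/\log(\mathrm e+x/C_\alpha(G))\bigr)$, i.e.\ your $\Phi$ with $\exp(+\tau/L_U)$, over \emph{all} nonempty induced subgraphs. Then $d'\ge d\,\mathrm e^{-\lambda(n)}>\mathrm e^{-\tau}d>\alpha d$ gives $h_1\ge C_\alpha(G)$ for free, and your ``main technical case'' disappears.

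Also, (i) is not an either/or averaging, which fails when $Z$ is tiny. One applies maximality to $X\cup Y$ and $Y\cup Z$ simultaneously, bounding the first crudely by $d'|X\cup Y|/2$. This yields
$2e_{H_1}(X,Z)\ge d'\bigl(p(1-\mathrm e^{-u})-|Y|\bigr)$,
where $p=|Y\cup Z|\ge h_1/4$ and $u=\lambda(h_1)-\lambda(p)\ge|X|\lambda'(h_1)$ by concavity. The whole gain comes from $Y\cup Z$ missing the $|X|$ vertices of $X$.

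\textbf{Part (ii) and $d'\ge1$.} Your bookkeeping does not give the stated size. Bounding the peeled set by $(3/4-\eta/16)h_1$ only yields $h_2>h_1/4$. Write $S_j$ for the $j$-th peeled piece, $W_i=S_1\cup\dots\cup S_i$ and $R_i=V(H_1)\setminus(B\cup W_i)$. What you need is $|W_t|<h_1/4$, whence $h_2=h_1-|B|-|W_t|>(3/4-\eta/16)h_1$.

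The idea that settles this and your unresolved issue with $B$ at once is to apply (i) a single time, to the first $W_i$ with $|W_i|\ge h_1/4$. Then $|B|<\eta h_1/16\le\eta|W_i|$ automatically. Since each piece is at most half the current remainder, $|W_i|\le(h_1-|B|+|W_{i-1}|)/2<5h_1/8$. So (i) applied to $W_i\mathbin{\dot\cup}B\mathbin{\dot\cup}R_i$ gives $e_{H_1}(W_i,R_i)\ge\frac32\eta d'|W_i|$. This contradicts the telescoped bound $e_{H_1}(W_i,R_i)\le\sum_{j\le i}e_{H_1}(S_j,R_j)<\eta d'|W_i|$.

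Finally, $d'\ge1$ is not justified by your $K_2$ remark; $K_2$ is not even admissible in your family when $C_\alpha(G)>2$. It follows from (i) with $X=\{v\}$ and $Y=\varnothing$, which shows that $H_1$ has no isolated vertices.
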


\begin{proof}
For $x\ge0$, define \[\lambda(x):=\tau\left(1-\frac{1}{\log(\mathrm e+\frac{x}{C_\alpha(G)})}\right).\]
The function $\lambda$ is increasing, takes values
in $[0,\tau)$, and $\lambda'(x)=\tau/((\mathrm e C_\alpha(G)+x)\log^2(\mathrm e+x/C_\alpha(G)))$.
Choose an induced subgraph $H_1$ of $G$ maximizing
$d(H_1)\mathrm e^{-\lambda(|H_1|)}$.
Comparing $H_1$ with $G$, we have $d'\ge d\mathrm e^{\lambda(h_1)-\lambda(n)} \ge\mathrm e^{-\tau}d>\alpha d$ as $0\le \lambda (h_1)\le \lambda(n)<\tau$, and $h_1\ge C_\alpha(G)\ge2$.
By maximality, we have
\begin{equation}\label{eq:crux-maximality}
d(Q)\le d'\mathrm e^{\lambda(|Q|)-\lambda(h_1)}~ \text{for every nonempty induced subgraph $Q\subseteq H_1$}.
\end{equation}

We first prove (i).
Consider a partition as in the statement. Let $s:=|X|$, $y:=|Y|$, $p:=h_1-s$, and $q:=s+y$.
Since $s\le 3h_1/4$, $p\ge h_1/4$, 
$e(H_1)=e(H_1[X\cup Y])+e(H_1[Y\cup Z])
-e(H_1[Y])+e_{H_1}(X,Z)$, $h_1-q=p-y$
and applying~\eqref{eq:crux-maximality} to $H_1[X\cup Y]$ and $H_1[Y\cup Z]$, we have
\begin{align}
2e_{H_1}(X,Z)
\ge d'\left(
h_1-q\mathrm e^{\lambda(q)-\lambda(h_1)}
-p\mathrm e^{\lambda(p)-\lambda(h_1)}
\right)\ge d'\left(
p\bigl(1-\mathrm e^{\lambda(p)-\lambda(h_1)}\bigr)
-y\right).
\label{eq:crux-cut}
\end{align}
Let $u:=\lambda(h_1)-\lambda(p)$.
As $0\le u\le\tau\le1/4$, $1-\mathrm e^{-u}\ge u/2$.
Since $\lambda'$ is decreasing, we have
\[
u=\lambda(h_1)-\lambda(p)
=\int_p^{h_1}\lambda'(x)\,dx
\ge s\lambda'(h_1)
=\frac{\tau s}{(\mathrm e C_\alpha(G)+h_1)L^2}.
\]
As $p\ge h_1/4$, $h_1\ge C_\alpha(G)$ and $y\le \eta s$, by \eqref{eq:crux-cut}, we have
\[2e_{H_1}(X,Z)\ge d'\left(p(1-\mathrm e^{-u})-y\right)\ge d'(\frac{\tau h_1s}{8(\mathrm e C_\alpha(G)+h_1)L^2}-y)
\ge d'(\frac{\tau s}{8(\mathrm e+1)L^2}-y)
\ge 3\eta d' s,\]
which proves (i).
Taking $X=\{v\}$ and $Y=\varnothing$, we see
that every vertex of $H_1$ has positive degree. Hence $d'\ge1$.

We next prove (ii).
Let $B=\{v\in V(H_1): d_{H_1}(v)>16d'/\eta\}$.
Since $\sum_{v\in B} d_{H_1}(v)\le\sum_{v\in V(H_1)} d_{H_1}(v) =d'h_1$, we have $|B|<\eta h_1/16$.
Let $R_0:=V(H_1)\setminus B$ and $W_0:=\varnothing$. For each $i\ge1$, if there exists a set $S_i\subseteq R_{i-1}$ such that $0<|S_i|\le |R_{i-1}|/2$ and $e_{H_1}(S_i,R_{i-1}\setminus S_i) <\eta d'|S_i|$, then set $R_i:=R_{i-1}\setminus S_i$ and
$W_i:=W_{i-1}\cup S_i$. Otherwise, terminate
the process.
Since $S_i$ is nonempty at each step, we have
$|R_i|<|R_{i-1}|$. Thus, as $H_1$ is finite,
the process terminates after finitely many steps.
For every $i\ge1$, since
$R_i\subseteq R_j$ for all $j\le i$, we have
\begin{align*}
e_{H_1}(W_i,R_i)
&=\sum_{j=1}^i e_{H_1}(S_j,R_i)\le\sum_{j=1}^i e_{H_1}(S_j,R_j)<\eta d'\sum_{j=1}^i|S_j|
=\eta d'|W_i|.
\end{align*}
We claim that $|W_i|<h_1/4$ for every $i\ge 0$. Suppose not, and let $i$
be the smallest index such that
$|W_i|\ge h_1/4$. Then
$|W_{i-1}|<h_1/4$ and
$|S_i|\le|R_{i-1}|/2$. Since
$|R_{i-1}|=h_1-|B|-|W_{i-1}|$, we have
\[
\frac{h_1}{4}\le |W_i|
\le\frac{h_1-|B|+|W_{i-1}|}{2}
<\frac{5h_1}{8}<\frac{3h_1}{4}.
\]
Moreover, $|B|\le\eta h_1/16\le\eta|W_i|$.
Thus the partition
$V(H_1)=W_i\mathbin{\dot\cup}B
\mathbin{\dot\cup}R_i$
satisfies the hypotheses of (i), which implies
$e_{H_1}(W_i,R_i)\ge(3/2)\eta d'|W_i|$,
a contradiction.

Let $R_t$ be the remaining set when the
process terminates, and $H_2:=H_1[R_t]$.
Since $|W_t|<h_1/4$ and
$|B|\le\eta h_1/16$, we have
\[
h_2=|R_t|=h_1-|B|-|W_t|
>\left(\frac34-\frac{\eta}{16}\right)h_1
\ge\frac{h_1}{2}.
\]
Furthermore, by the definition of $B$,
$\Delta(H_2)\le16d'/\eta$.
By the stopping condition, every $S\subseteq R_t$ with $0<|S|\le|R_t|/2$
satisfies
$e_{H_1}(S,R_t\setminus S)\ge\eta d'|S|$.
Since $H_2=H_1[R_t]$, this implies
$h(H_2)\ge\eta d'$.
Finally, $h_1\ge2$ and $h_2>h_1/2$
imply $h_2\ge2$, completing the proof.
\end{proof}

Now, we are ready to give a short proof of Theorem~\ref{thm:crux}.

\begin{proof}[Proof of Theorem~\ref{thm:crux}]
Fix $0<\alpha<1$. Let $\tau:=\min\{1/4,-(\log\alpha)/2\}$.
By Lemma~\ref{lem:crux-extract}, there exist
induced subgraphs $H_2\subseteq H_1\subseteq G$
satisfying the conclusions $(i)$ and $(ii)$,
with parameters $h_1,h_2,d',L$, and $\eta$.

Let $D:=\lceil16d'/\eta\rceil$ and
$\varepsilon:=\eta^2/32$.
Since $d'\ge1$ and $\eta<1$, we have
$D\le32d'/\eta$. Thus
$\Delta(H_2)\le D$ and
$h(H_2)\ge\eta d'\ge\varepsilon D$.
Applying Lemma~\ref{lem:crux-cheeger} to $H_2$,
we obtain a $J$-minor in $G$ with
$d(J)\ge a\varepsilon^8\sqrt{h_2D}$,
where $a>0$ is an absolute constant.

Since $h_2\ge h_1/2$ and
$D\ge d'\ge\mathrm e^{-\tau}d\ge d/2$,
we have $\sqrt{h_2D}\ge\frac12\sqrt{h_1d}$.
Moreover, by the definition of $\eta$,
we have
$\varepsilon=\tau^2/
(32^3(\mathrm e+1)^2L^4)$.
Consequently,
\[
\begin{aligned}
d(J)
&\ge a\varepsilon^8\sqrt{h_2D}\ge\frac{a\tau^{16}}
{2\cdot32^{24}(\mathrm e+1)^{16}}
\frac{\sqrt{h_1d}}
{\log^{32}(\mathrm e+h_1/C_\alpha(G))}\\
&=\frac{a\tau^{16}}
{2\cdot32^{24}(\mathrm e+1)^{16}}
\frac{\sqrt{x}}
{\log^{32}(\mathrm e+x)}
\sqrt{dC_\alpha(G)},
\end{aligned}
\]
where $x:=h_1/C_\alpha(G)\ge1$.
Since
$\log^{32}(\mathrm e+x)
\le64^{32}\sqrt{(\mathrm e+1)x}$,
we obtain
$d(J)\ge c_\alpha\sqrt{dC_\alpha(G)}$, where
\[
c_\alpha:=
\frac{a\tau^{16}}
{2\cdot32^{24}\cdot64^{32}
(\mathrm e+1)^{33/2}}>0.
\]
Since $\tau=1/4$ for all $0<\alpha\le1/2$,
the above choice of $c_\alpha$ is the same
positive constant throughout this range.
\end{proof}

\section{Applications}\label{sec:applications}

We give several applications of Theorem~\ref{thm:crux}
to graphs with forbidden subgraphs, subgraphs of Hamming
graphs, and pseudorandom graphs.
We first introduce some tools.

\begin{lemma}[\cite{Kostochka,Thomason}]
\label{lem:applications-KT}
There exists an absolute constant $c_{\mathrm{KT}}>0$
such that every graph of average degree $d\ge2$
contains a $K_r$-minor for some
$r\ge c_{\mathrm{KT}}d/\sqrt{\log d}$.
\end{lemma}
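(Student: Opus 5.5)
This statement is the classical Kostochka--Thomason theorem, which the paper cites rather than reproves. I would recall the standard proof in two stages. First, reduce to a small dense minor. Second, find a clique minor inside it by a random partition argument.

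\textbf{Stage 1: a small, dense, highly connected minor.} Take a minor-minimal minor $H$ of $G$ subject to $e(H)\ge k|H|$, where $k:=d/4$; since $G$ has average degree $d$, it satisfies $e(G)=(d/2)|G|\ge k|G|$, so $H$ exists. Minimality gives two facts.
\begin{itemize}
\item Deleting an edge destroys the density condition, so $e(H)$ is essentially $k|H|$ and $H$ still has average degree about $d/2$.
\item Contracting an edge $uv$ loses $1+|N(u)\cap N(v)|$ edges. Minimality forces this to exceed $k$, so every edge lies in at least $k$ triangles.
\end{itemize}
Hence for each vertex $v$, the graph $H[N(v)]$ has minimum degree at least $k$. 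Passing to a neighbourhood of a vertex of minimum degree, which is at most about $d$, yields a graph $F$ with $|F|\le d$ and $\delta(F)\ge d/4$. So $F$ has constant density.

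\textbf{Stage 2: a clique minor in a dense graph.} In $F$, with $m:=|F|\le d$ and minimum degree at least $m/4$, I want a $K_r$-minor with $r\asymp m/\sqrt{\log m}$. Partition $V(F)$ at random into $r$ sets of size $s\approx C\sqrt{\log m}$. For a fixed pair of sets, the probability of no edge between them is roughly $(3/4)^{s^2/\Theta(1)}\le m^{-C'}$. With an appropriate constant, most pairs are adjacent. The sets themselves need not be connected. To repair this, use a sparse set of connector vertices: by the minimum degree condition, any two vertices have many common neighbours, or at least short paths between them. Alternatively use Thomason's route of choosing each branch set as a small connected set, a path or star built greedily, of size $O(\sqrt{\log m})$. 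A random such family then has almost all pairs joined. Finally discard a constant fraction of the branch sets, those incident to missing pairs, by a Tur\'an-type cleanup. This leaves $K_{r'}$ with $r'\ge c\,m/\sqrt{\log m}$.

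\textbf{Main obstacle and the case split.} The main obstacle is connectivity of the random branch sets while preserving near-independence of the adjacency events. I would handle it by first extracting a dense subgraph of small diameter (two) via the codegree condition, then building stars around random centres. If $m$ is much smaller than $d$, Stage 1 already gives density $\ge k/m$, which is larger, and the same argument applies with a better exponent. Combining the two stages gives $r\ge c_{\mathrm{KT}}\,d/\sqrt{\log d}$ for $d\ge2$, adjusting the constant to cover small $d$, where a $K_2$-minor trivially exists.
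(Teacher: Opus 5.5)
The paper gives no proof of this lemma. It is quoted from Kostochka and Thomason and used only as a black box in the applications, so your sketch has to stand on its own. Stage~1 is the standard minimal-minor reduction and is fine; up to rounding it gives $|F|\le 2k+1$ and $\delta(F)\ge k$. The trouble is Stage~2, which is where all of the $\sqrt{\log}$ content lives.

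Your bound $\Pr[e(V_i,V_j)=0]\le(3/4)^{s^2/\Theta(1)}$ treats the $s^2$ pairs of $V_i\times V_j$ as independent trials. That holds in $G(m,p)$ but not in a general graph of linear minimum degree. Given $V_i$, the event is that $V_j$ lies inside the set $U$ of vertices with no neighbour in $V_i$, and all you know is $|U|\le m-\delta(F)$. Take $F=K_{m/2,m/2}$, which has every property you extract in Stage~1. There two random $s$-sets span no edge with probability about $2^{1-2s}=e^{-\Theta(\sqrt{\log m})}$, far above $m^{-C'}$. For \emph{uniformly random} sets this can be repaired:
\begin{itemize}
\item $\mathbb E|U|\le m(1-\delta(F)/m)^s$;
\item by Markov, all but an $e^{-\Omega(s)}$-fraction of the $V_i$ have $|U|\le m e^{-\Omega(s)}$;
\item only for these does $(|U|/m)^s$ give an $s^2$ exponent, and the atypical sets are discarded.
\end{itemize}
But this repair relies essentially on the $s$ vertices of $V_j$ being close to uniform over $V(F)$.

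That uniformity is exactly what your connectivity fixes give up, and this is the genuine gap. Your claim that any two vertices have many common neighbours does not follow from your hypotheses. Even $|F|\le 2k+1$ and $\delta(F)\ge k$ only give one common neighbour for a nonadjacent pair. So nothing in the sketch produces the $\Theta(m)$ pairwise disjoint connectors needed to make uniform random sets connected.

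Stars or paths grown from random centres are connected but local, and for them the adjacency estimate fails outright. Let $F$ be two disjoint copies of $K_{m/2}$ joined by a perfect matching. It has order $2k$, minimum degree $k$ and diameter two, and every nonadjacent pair has exactly two common neighbours. A star of size $O(\sqrt{\log m})$ lies almost entirely in the copy containing its centre. Hence two stars centred in different copies are adjacent with probability only $O(s^2/m)$. About half of all pairs are then missing, and deleting the branch sets incident to missing pairs deletes essentially all of them.

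What your plan lacks is a way to build pairwise disjoint connected branch sets of size $O(\sqrt{\log m})$ whose vertices stay spread out enough for the $s^2$-exponent estimate. Supplying this, whether from further structure of the minimal minor or from a separate connectivity argument, is the substantive part of the Kostochka--Thomason proofs.
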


We also use two classical extremal estimates.
For a graph $F$, let $\operatorname{ex}(n,F)$ denote
the maximum number of edges in an $n$-vertex $F$-free graph.

\begin{lemma}[\cite{KovariSosTuran1954}]
\label{lem:applications-KST}
For every fixed pair of integers $2\le s\le t$, $\operatorname{ex}(n,K_{s,t})
=O_{s,t}(n^{2-1/s})$.
\end{lemma}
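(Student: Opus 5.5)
The statement to prove is the Kővári–Sós–Turán bound $\operatorname{ex}(n,K_{s,t})=O_{s,t}(n^{2-1/s})$. I would use the standard double-counting argument over $s$-subsets of vertices with common neighbours.

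Let $G$ be an $n$-vertex $K_{s,t}$-free graph with $e$ edges and degrees $d_1,\dots,d_n$, so $\sum_v d_v=2e$. Count the pairs $(v,S)$ with $S$ an $s$-subset of $N_G(v)$. Each such pair contributes $\binom{d_v}{s}$ for fixed $v$, so the number of pairs is $\sum_v\binom{d_v}{s}$. On the other hand, any fixed $s$-set $S\subseteq V(G)$ lies in the neighbourhoods of at most $t-1$ vertices, since otherwise $S$ together with $t$ common neighbours spans a $K_{s,t}$. Hence
\[
\sum_{v\in V(G)}\binom{d_v}{s}\le (t-1)\binom{n}{s}\le (t-1)\frac{n^s}{s!}.
\]

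Next, apply convexity. The function $x\mapsto\binom{x}{s}$, extended as $0$ for $x<s-1$ and as the polynomial $x(x-1)\cdots(x-s+1)/s!$ for $x\ge s-1$, is convex on $[0,\infty)$. Jensen's inequality then gives $n\binom{2e/n}{s}\le (t-1)n^s/s!$.

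Finally, split into two cases.

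\textbf{Case $2e/n\le 2s$.} Then $e\le sn=O_{s,t}(n^{2-1/s})$ and we are done.

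\textbf{Case $2e/n>2s$.} Writing $x=2e/n$, each factor satisfies $x-i\ge x/2$ for $i<s$, so $\binom{x}{s}\ge (x/2)^s/s!$. Substituting gives $n(e/n)^s\le (t-1)n^s$, hence
\[
e\le (t-1)^{1/s}n^{2-1/s}.
\]

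The only mildly delicate step is justifying convexity of the extended binomial and handling small degrees, and the case split disposes of the latter. Since the lemma is classical and only cited, no further obstacle is expected.
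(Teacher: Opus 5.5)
The paper does not prove this lemma but only cites K\H{o}v\'ari, S\'os and Tur\'an. Your argument is exactly their standard double count of pairs $(v,S)$ with $S$ an $s$-subset of $N_G(v)$, and it is correct as written, giving $e(G)\le\max\{sn,(t-1)^{1/s}n^{2-1/s}\}$.

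The convexity step you flag but leave unproved is easy to close. On $[s-1,\infty)$ the function $\frac{1}{s!}\prod_{i=0}^{s-1}(x-i)$ is a product of nonnegative nondecreasing linear factors, hence convex and nondecreasing. It vanishes at $x=s-1$, so gluing it to the zero function on $[0,s-1]$ keeps the slope nondecreasing, and the extension is convex on $[0,\infty)$.
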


\begin{lemma}[\cite{BondySimonovits1974}]
\label{lem:applications-even-cycles}
For every fixed integer $k\ge2$, $\operatorname{ex}(n,C_{2k})=O_k(n^{1+1/k})$.
\end{lemma}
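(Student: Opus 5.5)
Since the lemma is classical and is only quoted from the literature, I would give the standard layered-BFS proof along the lines of Bondy--Simonovits, in the streamlined form due to Pikhurko and Verstra\"ete. Let $G$ be an $n$-vertex $C_{2k}$-free graph with $e(G)\ge Cn^{1+1/k}$, where $C=C(k)$ is a large constant to be fixed; I will derive a contradiction.

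\textbf{Reductions.} First I pass to a bipartite subgraph with at least half the edges. Then, by repeatedly deleting vertices of degree below half the current average degree, I obtain a nonempty bipartite subgraph $G'$ with minimum degree $\delta\ge \tfrac{C}{2}n^{1/k}$. This loses only constant factors. Fix a root $v$ and let $L_0=\{v\},L_1,\dots,L_k$ be the breadth-first layers of $G'$ from $v$. Because $G'$ is bipartite, every layer $L_i$ is an independent set, so every edge of $G'$ joins two consecutive layers.

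\textbf{Growth claim.} The goal is to show, by induction on $i<k$, that
\[
|L_{i+1}|\ge \frac{\delta}{8k}|L_i| .
\]
Given this, $|L_k|\ge(\delta/8k)^k>n$ once $C$ is large enough, which is the required contradiction. For the inductive step, count the edges leaving $L_i$. Every vertex of $L_i$ has degree at least $\delta$. The edges back into $L_{i-1}$ number at most about $\tfrac{\delta}{8k}|L_i|\cdot\ldots$; more precisely, the induction hypothesis together with a symmetric application of the key lemma below bounds them by roughly half of $\delta|L_i|$. Hence the bipartite graph $F_i:=G'[L_i,L_{i+1}]$ has at least $\tfrac{\delta}{2}|L_i|$ edges. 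If $|L_{i+1}|<\tfrac{\delta}{8k}|L_i|$, then $F_i$ has average degree at least about $4k$, and so it contains a subgraph $F$ of minimum degree at least $2k$.

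\textbf{Key lemma and closing the cycle.} I would use the following statement. Let $F$ be a graph of minimum degree at least $2k$, and let $V(F)=X\mathbin{\dot\cup}Y$ be a partition with $X,Y\ne\varnothing$ such that $e_F(X,Y)>0$ and every vertex has a neighbour on the same side as well. Then $F$ contains $X$--$Y$ paths of every length $\ell\le 2k$ of the appropriate parity. This is proved by taking a longest path and using the minimum degree to produce many chords, which yield paths of all the required lengths.

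To apply it, take the BFS tree $T$ and the deepest vertex $w$ (in some layer $L_j$, $j<i$) whose subtree contains the whole set $V(F)\cap L_i$. Split $V(F)$ according to the branches of $T$ below $w$, putting one branch into $X$ and the remaining branches into $Y$. Choose the branch split so that $F$ has an $X$--$Y$ edge.

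A path in $F$ of length $\ell$ from $X$ to $Y$, combined with the two tree paths back to $w$ (each of length $i-j$, and internally disjoint since they lie in different branches), gives a cycle of length $\ell+2(i-j)$. Choosing $\ell=2k-2(i-j)$, which satisfies $2\le\ell\le 2k$ and has the correct (even) parity, produces a $C_{2k}$. This contradicts the assumption on $G$ and proves the growth claim.

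The main obstacle is the key lemma, together with the bookkeeping needed to guarantee that $F$ actually has edges crossing the split at $w$. If the same-side-neighbour condition fails, I would first try to fix this by using the minimality of $w$ (its subtree branches at $w$ on $V(F)\cap L_i$) and, if necessary, by passing to the larger of the two halves of $F$.
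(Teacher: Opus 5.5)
The paper does not prove this lemma; it is quoted from Bondy--Simonovits. Your sketch therefore has to stand on its own, and its central step has a genuine gap.

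The outer bookkeeping is fine. Once you know that no $G'[L_{i'},L_{i'+1}]$ with $1\le i'<k$ contains a subgraph of minimum degree $2k$, you get $e(L_{i-1},L_i)\le(2k-1)(|L_{i-1}|+|L_i|)\le\delta|L_i|/2$ from $|L_{i-1}|\le|L_i|$, and the growth estimate follows.

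The problem is closing the cycle.
\begin{itemize}
\item The union $R\cup T_u\cup T_v$ is a cycle of length $\ell+2(i-j)$ only if both ends $u,v$ of the $F$-path $R$ lie in $L_i$. If an end lies in $L_{i+1}$, its tree path runs through its parent in $L_i$, which may lie on $R$, and the length count changes anyway.
\item Your branch rule does not determine colours on $V(F)\cap L_{i+1}$: their parents need not lie in $F$, let alone below $w$. So ``an $X$--$Y$ edge'' of the bipartite graph $F$ is not even defined.
\item However you colour the $L_{i+1}$-vertices, your key lemma only promises an $X$--$Y$ path of length $\ell$ ``of the appropriate parity''. Such a path may have both ends in $L_{i+1}$.
\item To guarantee from that conclusion alone that the ends lie in $L_i$, one side would have to avoid $L_{i+1}$ entirely. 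But then that side is an independent subset of $L_i$, and your same-side-neighbour hypothesis fails.
\end{itemize}
So the lemma as you formulate it cannot be applied, and neither remedy you suggest addresses this.

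What the argument needs is a statement such as the following. If $F$ is a connected bipartite graph with parts $P,Q$, $\delta(F)\ge2k$ (any bound of order $k$ would do), and $P=A\mathbin{\dot\cup}B$ with $A,B\ne\varnothing$, then $F$ contains a path of every even length $2\le\ell\le2k-2$ with one end in $A$ and the other in $B$.

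Connectivity is arranged by passing to a component of $F$ before choosing $w$. Then some vertex of $Q$ has neighbours in both $A$ and $B$, which is the correct substitute for your crossing edge.

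Proving this statement is the real content of Bondy--Simonovits, and ``longest path plus chords'' does not do it by itself. The colours of the $P$-vertices along a path can be non-constant yet $\ell$-periodic, so that every subpath of length $\ell$ has equally coloured ends. You would need three things:
\begin{itemize}
\item their lemma on 2-coloured cycles with a chord (essentially: if both colours occur and the colouring is not proper, there are paths of every length below the order with differently coloured ends);
\item to apply it with colour classes $A$ and $B\cup Q$, so that even paths automatically end in $P$;
\item an argument producing such a cycle with a chord of order greater than $\ell$ that meets both $A$ and $B$, or a separate treatment of the case where none exists (for example, when a single vertex of $Q$ separates the $A$-part of $F$ from the $B$-part).
\end{itemize}
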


For integers $m\ge1$ and $q\ge2$, let $H(m,q)$ be
the Hamming graph on $[q]^m$, in which two vertices
are adjacent if and only if they differ in exactly
one coordinate. In particular, $H(m,2)$ is the
$m$-dimensional hypercube $Q_m$.
We shall use the following result of Squier, Torrence
and Vogt~\cite{SquierTorrenceVogt2001}.

\begin{lemma}[\cite{SquierTorrenceVogt2001}]\label{lem:applications-Hamming-isoperimetry}
Every nonempty subgraph $F$ of $H(m,q)$ satisfies
$2e(F)\le (q-1)|F|\log_q|F|$.
Consequently, $|F|\ge q^{d(F)/(q-1)}$.
\end{lemma}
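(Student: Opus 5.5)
The statement to prove is Lemma~\ref{lem:applications-Hamming-isoperimetry}, the edge-isoperimetric inequality for Hamming graphs, together with its consequence. For the first part I would argue by induction on $m$, via coordinate splitting. It suffices to treat induced subgraphs $F=H(m,q)[S]$, since deleting edges only helps. Write $f(k):=(q-1)k\log_q k$ and $E(S):=2e(H(m,q)[S])$; we must show $E(S)\le f(|S|)$.

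\emph{Base case.} For $m=1$ the graph is $K_q$, so $E(S)=|S|(|S|-1)$. We need $k(k-1)\le (q-1)k\log_q k$ for $1\le k\le q$, i.e.\ $(k-1)/\log k\le (q-1)/\log q$. This holds because $(x-1)/\log x$ is increasing in $x$.

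\emph{Inductive step.} Split $S$ by the last coordinate into slices $S_1,\dots,S_q$, where $S_i$ consists of the points of $S$ with last coordinate $i$. Let $s_i:=|S_i|$, ordered so that $s_1\ge\cdots\ge s_q$. Edges inside a slice form a copy of $H(m-1,q)$, so by induction they contribute at most $\sum_i f(s_i)$ to $E(S)$. Edges between slices join points that agree in the first $m-1$ coordinates. Each point $x$ of $S$ has at most $\#\{j: \text{the projection of }x\text{ lies in }S_j\}-1$ such neighbours. Summing over all fibres and bounding the overlaps pairwise by $\min(s_i,s_j)$ gives at most $2\sum_{i<j}\min(s_i,s_j)=2\sum_j (j-1)s_j$ from these edges. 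It therefore remains to prove the purely numerical inequality
\[
\sum_{i=1}^q f(s_i)+2\sum_{j=1}^q (j-1)s_j\le f\Bigl(\sum_i s_i\Bigr)\qquad (s_1\ge\cdots\ge s_q\ge0).
\]

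\emph{Main obstacle.} I expect this numerical inequality to be the crux. I would first check it is tight on full subcubes: $s_1=\cdots=s_k=q^{m-1}$ with the rest zero recovers equality when $k=q$. Next I would use a smoothing argument showing the left side minus the right side is maximized at configurations where each $s_i\in\{0,s_1\}$. For $k$ equal slices of size $s$ the inequality reads $kf(s)+k(k-1)s\le f(ks)$, which is equivalent to $k(k-1)\le (q-1)k\log_q k$, exactly the base case. The delicate part is justifying the reduction to equal slices; I would handle it with a convexity/concavity analysis of $f(x)-2cx$ in each variable, or with a merging lemma that compresses two slices. Should this become messy, I would fall back on Squier--Torrence--Vogt's compression argument (this is their theorem).

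\emph{The consequence.} Once $2e(F)\le(q-1)|F|\log_q|F|$ is established, divide by $|F|$ to get $d(F)\le (q-1)\log_q|F|$, hence $|F|\ge q^{d(F)/(q-1)}$.
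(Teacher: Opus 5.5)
The paper does not prove this lemma; it quotes it from Squier, Torrence and Vogt. Its ``consequently'' clause is exactly your last line, $d(F)=2e(F)/|F|\le(q-1)\log_q|F|$. Your induction on $m$ is therefore a genuinely different, self-contained route, and its ingredients are correct:
\begin{itemize}
\item passing to induced subgraphs;
\item the slice decomposition;
\item the cross-edge count $\sum_y k_y(k_y-1)=2\sum_{i<j}|P_i\cap P_j|\le 2\sum_j(j-1)s_j$, where $k_y$ is the number of $i$ with $(y,i)\in S$ and $P_i$ is the projection of $S_i$.
\end{itemize}
The base case also follows directly from concavity of $\log_q$: its chord on $[1,q]$ gives $\log_q k\ge (k-1)/(q-1)$.

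The step you flag as the crux closes in one line, without smoothing. With $S=\sum_i s_i$ and $p_i=s_i/S$, one has $\sum_i f(s_i)-f(S)=-(q-1)S\,H_q(p)$, where $H_q(p)=-\sum_i p_i\log_q p_i$. So your numerical inequality is equivalent to
\[
\frac{2}{q-1}\sum_{j=1}^{q}(j-1)p_j\le H_q(p)\qquad\text{for } p_1\ge\cdots\ge p_q\ge 0,\ p_1+\cdots+p_q=1 .
\]
The left side is linear and the right side is concave in $p$, so it suffices to check the vertices of this ordered simplex. These are $u_k=(1/k,\dots,1/k,0,\dots,0)$ for $1\le k\le q$, where the inequality reads $(k-1)/(q-1)\le\log_q k$, which is your base case. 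This is the rigorous form of your claim that the extremal configurations have $s_i\in\{0,s_1\}$.

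If you prefer a merging argument instead, move whole blocks of equal $s_i$ together; along such a direction the difference is still convex. Always move the lowest nonzero block within $[0,z]$, where $z$ is the next larger value. Single-coordinate moves can undo ties, and the top block has no upper endpoint. With this filled in, your argument is a complete proof of a result the paper only cites.
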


\subsection{Graphs with forbidden subgraphs}

Our first application shows that an extremal number
estimate yields a corresponding dense minor bound.

\begin{corollary}\label{cor:forbidden}
Let $F$ be a fixed bipartite graph satisfying
$\operatorname{ex}(n,F)\le Cn^\gamma$
for some constants $C>0$ and $\gamma>1$.
Then every $F$-free graph $G$ with sufficiently
large average degree $d$ contains a $J$-minor such that
\[
d(J)=\Omega_{F,C,\gamma}
\left(d^{\frac{\gamma}{2(\gamma-1)}}\right).
\]
Consequently, $G$ contains a clique minor of order
$\Omega_{F,C,\gamma}
\left(d^{\frac{\gamma}{2(\gamma-1)}}/\sqrt{\log d}\right)$.
\end{corollary}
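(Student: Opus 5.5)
We apply Theorem~\ref{thm:crux} with $\alpha=1/2$, so the only task is a lower bound on $C_{1/2}(G)$ in terms of $d$. Let $H\subseteq G$ be a subgraph with $|H|=C_{1/2}(G)=:m$ and $d(H)\ge d/2$. Since $H$ is a subgraph of $G$, it is $F$-free, and hence
\[
\frac{dm}{4}\le e(H)\le \operatorname{ex}(m,F)\le Cm^\gamma .
\]
This gives $m^{\gamma-1}\ge d/(4C)$, that is, $C_{1/2}(G)\ge (d/(4C))^{1/(\gamma-1)}$.

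Theorem~\ref{thm:crux} (with the uniform constant $c:=c_{1/2}$) then yields a $J$-minor with
\[
d(J)\ge c\sqrt{d\,C_{1/2}(G)}\ge c\,(4C)^{-\frac{1}{2(\gamma-1)}}\, d^{\frac12+\frac{1}{2(\gamma-1)}}
= c\,(4C)^{-\frac{1}{2(\gamma-1)}}\, d^{\frac{\gamma}{2(\gamma-1)}},
\]
which is the first assertion. The implied constant depends only on $C$ and $\gamma$.

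For the clique minor, we apply Lemma~\ref{lem:applications-KT} to $J$. When $d$ is large, $D:=d(J)\ge2$, so $J$ contains a $K_r$-minor with $r\ge c_{\mathrm{KT}}D/\sqrt{\log D}$. A minor of a minor of $G$ is a minor of $G$, so $G$ also has this $K_r$-minor. The function $x\mapsto x/\sqrt{\log x}$ is increasing for $x\ge \mathrm e^{1/2}$, so we may replace $D$ by the lower bound $D_0:=c'd^{\gamma/(2(\gamma-1))}$. Since $D\le d(G)\cdot$ nothing useful is available here, we need an upper bound for $\log D_0$ instead: $\log D_0=O_\gamma(\log d)$. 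Together these give $r=\Omega(d^{\gamma/(2(\gamma-1))}/\sqrt{\log d})$.

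The only point needing care is this monotonicity step. If $D$ is much larger than $D_0$, then $D/\sqrt{\log D}$ is still at least $D_0/\sqrt{\log D_0}$, so the argument goes through. We must also ensure that $D_0\ge \mathrm e^{1/2}$ and $D_0\ge2$, which is where "sufficiently large $d$" is used. Otherwise the argument is routine.
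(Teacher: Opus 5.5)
Your proposal is correct and follows the same route as the paper: you bound $C_{1/2}(G)\ge (d/(4C))^{1/(\gamma-1)}$ via the extremal number, apply Theorem~\ref{thm:crux} with $\alpha=1/2$, and then apply Lemma~\ref{lem:applications-KT} to $J$. Your monotonicity argument for $x/\sqrt{\log x}$ makes explicit the clique-minor step that the paper leaves implicit; the garbled sentence about an upper bound on $D$ should simply be deleted, since nothing depends on it.
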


\begin{proof}
Let $H\subseteq G$ satisfy $d(H)\ge d/2$.
Since $H$ is $F$-free, we have
$d|H|/4\le e(H)\le C|H|^\gamma$.
Thus $|H|\ge(d/(4C))^{1/(\gamma-1)}$, and hence
$C_{1/2}(G)=\Omega_{F,C,\gamma}
(d^{1/(\gamma-1)})$.
Applying Theorem~\ref{thm:crux} with $\alpha=1/2$
gives the stated lower bound on $d(J)$.
The clique minor assertion follows from
Lemma~\ref{lem:applications-KT}.
\end{proof}

\noindent\textbf{Remark.}
The following examples show that Corollary~\ref{cor:forbidden} recovers the polynomial exponents in several known results.

\begin{itemize}
\item
Since $\operatorname{ex}(n,C_4)=O(n^{3/2})$,
every $C_4$-free graph of average degree $d$
contains a $J$-minor with $d(J)=\Omega(d^{3/2})$,
and a clique minor of order
$\Omega(d^{3/2}/\sqrt{\log d})$.
The dense minor bound recovers the case $k=2$
of Theorem~4.7 of Krivelevich and
Sudakov~\cite{KrivelevichSudakov2009},
which extends the earlier girth-five result of
K\"uhn and Osthus~\cite[Corollary~3]{KuhnOsthus2003}.

\item
For fixed $2\le s\le t$,
Lemma~\ref{lem:applications-KST} gives
$\operatorname{ex}(n,K_{s,t})
=O_{s,t}(n^{2-1/s})$.
Hence every $K_{s,t}$-free graph of average degree
$d$ contains a $J$-minor with $d(J)=\Omega_{s,t}(d^{1+\frac{1}{2(s-1)}})$.
The bound recovers Theorem~4.5 of
Krivelevich and Sudakov~\cite{KrivelevichSudakov2009}.

\item
For every fixed $k\ge2$,
Lemma~\ref{lem:applications-even-cycles} gives
$\operatorname{ex}(n,C_{2k})=O_k(n^{1+1/k})$.
Thus every $C_{2k}$-free graph of average degree $d$
contains a $J$-minor with
$d(J)=\Omega_k(d^{(k+1)/2})$,
and a clique minor of order
$d^{(k+1)/2-o(1)}$.
This recovers Theorem~4.7 of Krivelevich and
Sudakov~\cite{KrivelevichSudakov2009}.

\item
Let $F$ be any fixed bipartite graph with
\textit{Tur\'an exponent}
$\gamma(F):=\limsup_{n\to\infty}
\frac{\log\operatorname{ex}(n,F)}{\log n}>1$.
Since $\operatorname{ex}(n,F)\le n^{\gamma(F)+o(1)}$,
Corollary~\ref{cor:forbidden} implies that every
$F$-free graph $G$ of sufficiently large average
degree $d$ contains a $J$-minor with
$d(J)\ge d^{\frac{\gamma(F)}
{2(\gamma(F)-1)}-o(1)}$.
This recovers Theorem~5.3 of Norin, Postle and
Song~\cite{NorinPostleSong2023}.

\end{itemize}

\subsection{Subgraphs of Hamming graphs}

Chandran and Sivadasan~\cite{ChandranSivadasan2007}
proved that the Hamming graph $H(m,q)$ contains a
clique minor of order at least $q^{\lfloor(m-1)/2\rfloor}$.
Wood~\cite{Wood2011} proved that $H(m,q)$ contains a clique minor of order at least $\lfloor q/2\rfloor\lfloor\sqrt q\rfloor^{m-1}$.
For arbitrary subgraphs, Haslegrave et
al.~\cite{HaslegraveEtAl} showed that, for fixed $q$
and sufficiently large $d$, every subgraph of
$H(m,q)$ with average degree $d$ contains a cycle
of length at least $q^{d/(q-1)-o(d)}$.
In the same setting, our result guarantees
a minor of average degree at least
$q^{d/(2(q-1))-O_q(\log d)}$
and a clique minor of the same order.
In particular, these bounds depend only on
the average degree, independently of the
ambient dimension.

\begin{corollary}\label{cor:Hamming-minors}
Fix an integer $q\ge2$ and $0<\alpha<1$.
Every subgraph $G\subseteq H(m,q)$ with average degree
$d>0$ contains a $J$-minor satisfying
\[
d(J)\ge c_\alpha\sqrt d\,
q^{\alpha d/(2(q-1))},
\]
where $c_\alpha>0$ is the constant in
Theorem~\ref{thm:crux}.
It also contains a $K_r$-minor with
$r\ge c_{\alpha,q}q^{\alpha d/(2(q-1))}$,
where $c_{\alpha,q}>0$ depends only on $\alpha$ and $q$.

Moreover, there exist constants $c_q>0$ and $d_q$
depending only on $q$ such that, whenever $d\ge d_q$,
$G$ contains a $K_s$-minor with
$s\ge c_qd^{-16}q^{d/(2(q-1))}$.
\end{corollary}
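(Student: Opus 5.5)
The plan is to bound the crux from below using the isoperimetric inequality of Lemma~\ref{lem:applications-Hamming-isoperimetry}, then apply Theorem~\ref{thm:crux}, and finally pass from a dense minor to a clique minor with Lemma~\ref{lem:applications-KT}.

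\emph{Dense minor.} Let $H\subseteq G$ with $d(H)\ge\alpha d$. Then $H$ is a nonempty subgraph of $H(m,q)$, so Lemma~\ref{lem:applications-Hamming-isoperimetry} gives $|H|\ge q^{d(H)/(q-1)}\ge q^{\alpha d/(q-1)}$. Hence $C_\alpha(G)\ge q^{\alpha d/(q-1)}$. Theorem~\ref{thm:crux} then yields a $J$-minor with
\[
d(J)\ge c_\alpha\sqrt{d\,C_\alpha(G)}\ge c_\alpha\sqrt d\,q^{\alpha d/(2(q-1))}=:D_0 .
\]

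\emph{Clique minor.} The function $f(x)=x/\sqrt{\log x}$ is increasing for $x\ge \mathrm e^{1/2}$. Suppose $D_0\ge \mathrm e$. Applying Lemma~\ref{lem:applications-KT} to $J$ (note $d(J)\ge D_0\ge2$) and using monotonicity of $f$ gives a $K_r$-minor of $J$, hence of $G$, with $r\ge c_{\mathrm{KT}}D_0/\sqrt{\log D_0}$. Since $d>0$ forces $d\ge1$, we have $\log D_0\le \log c_\alpha+\tfrac12\log d+\tfrac{\alpha d\log q}{2(q-1)}=O_{\alpha,q}(d)$. The factor $\sqrt d$ in $D_0$ therefore cancels the $\sqrt{\log D_0}$, leaving $r\ge c_{\alpha,q}q^{\alpha d/(2(q-1))}$. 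If $D_0<\mathrm e$, then $d$ is bounded in terms of $\alpha,q$, so $q^{\alpha d/(2(q-1))}$ is bounded. Since $G$ has an edge it has a $K_2$-minor, and shrinking $c_{\alpha,q}$ covers this case.

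\emph{The $d^{-16}$ statement.} Here I let $\alpha$ depend on $d$, taking $\alpha:=1-1/d$ for $d\ge d_q\ge2$. The key point, and the main obstacle, is that the first part uses $c_\alpha$ as a black box. We instead need the explicit form $c_\alpha=a\tau^{16}/K$ from the proof of Theorem~\ref{thm:crux}, with $K$ absolute and $\tau=\min\{1/4,-(\log\alpha)/2\}$. Since $-\log(1-x)\ge x$, we get $\tau\ge 1/(2d)$ and so $c_\alpha\ge a'd^{-16}$. Also $q^{\alpha d/(2(q-1))}=q^{d/(2(q-1))}q^{-1/(2(q-1))}$, and the second factor is a constant depending only on $q$. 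Thus $D_0\ge c'_q d^{-16}\sqrt d\, q^{d/(2(q-1))}$. For $d\ge d_q$ this is at least $\mathrm e$, and $\log D_0\le C_q d$. Applying Lemma~\ref{lem:applications-KT} and monotonicity of $f$ as before cancels $\sqrt d$ and gives $s\ge c_q d^{-16}q^{d/(2(q-1))}$. The only care needed is to verify that the proof of Theorem~\ref{thm:crux} indeed gives this uniform dependence $c_\alpha\propto\tau^{16}$ for every $0<\alpha<1$, which it does by the displayed formula for $c_\alpha$ there.
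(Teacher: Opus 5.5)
Your proposal is correct and follows the paper's proof essentially step for step. You bound the crux by $C_\alpha(G)\ge q^{\alpha d/(q-1)}$ via Lemma~\ref{lem:applications-Hamming-isoperimetry} and apply Theorem~\ref{thm:crux}. You then apply Lemma~\ref{lem:applications-KT}, usefully making explicit the monotonicity of $x/\sqrt{\log x}$ that the paper leaves implicit. For the last assertion you take $\alpha=1-1/d$ and use the explicit $\tau^{16}$ dependence of $c_\alpha$, exactly as the paper does.

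One harmless slip: $d>0$ does not force $d\ge1$, since $G$ may have isolated vertices. The argument is unaffected, because $\log d\le d$ and $c_\alpha<1$ still give $\log D_0=O_{\alpha,q}(d)$.
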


\begin{proof}
Let $F\subseteq G$ satisfy $d(F)\ge\alpha d$.
By Lemma~\ref{lem:applications-Hamming-isoperimetry},
we have $|F|\ge q^{d(F)/(q-1)}
\ge q^{\alpha d/(q-1)}$.
Hence $C_\alpha(G)\ge q^{\alpha d/(q-1)}$,
and the first assertion follows immediately from
Theorem~\ref{thm:crux}.

Let $x:=c_\alpha\sqrt d\,q^{\alpha d/(2(q-1))}$.
For all sufficiently large $d$, we have
$x\ge2$ and $\log x\le B_{\alpha,q}d$
for some constant $B_{\alpha,q}>0$.
Applying Lemma~\ref{lem:applications-KT}
to the minor $J$, we obtain a $K_r$-minor in $G$
with $r\ge c_{\mathrm{KT}}x/\sqrt{\log x}
\ge c_{\alpha,q}q^{\alpha d/(2(q-1))}$.
For the remaining bounded range of $d$,
the assertion holds after decreasing $c_{\alpha,q}$,
as $d>0$ ensures that $G$ contains an edge.

For the final assertion, assume $d\ge2$ and choose
$\alpha:=1-1/d$.
The proof of Theorem~\ref{thm:crux} shows that
$c_\alpha$ can be chosen to satisfy
$c_\alpha\ge c_*(1-\alpha)^{16}=c_*d^{-16}$
for some absolute constant $c_*>0$.
Consequently, $G$ contains a minor $J$ with
\[
d(J)\ge
c_*d^{-31/2}q^{(d-1)/(2(q-1))}
=c_*q^{-1/(2(q-1))}d^{-31/2}
q^{d/(2(q-1))}.
\]
For sufficiently large $d$, the right-hand side
is at least $2$ and its logarithm is $O_q(d)$.
Applying Lemma~\ref{lem:applications-KT}
to $J$ again gives a clique minor of order at least
$c_qd^{-16}q^{d/(2(q-1))}$.
\end{proof}

We next determine the average degree threshold
for forcing a clique minor in subgraphs of
Hamming graphs.
For integers $q\ge2$ and $t\ge3$, let $F_q(t)$
be the supremum of $d(G)$ over all nonempty
$K_t$-minor-free subgraphs $G$ of $H(m,q)$,
where the dimension $m$ is unrestricted.
For general graphs, the results of
Kostochka~\cite{Kostochka} and
Thomason~\cite{Thomason} show that the
average degree threshold for forcing a
$K_t$-minor is of order $t\sqrt{\log t}$.
In contrast, the following corollary shows that
this threshold is $2(q-1)\log_q t+O_q(\log\log t)$
for subgraphs of Hamming graphs with fixed $q$.
In particular, the leading constant $2(q-1)$ is best
possible, as witnessed by the full Hamming graphs.

\begin{corollary}\label{cor:Hamming-excluded-minor}
For every fixed integer $q\ge2$, as $t\to\infty$,
\[
F_q(t)=2(q-1)\log_qt+O_q(\log\log t).
\]
In particular,
$F_2(t)=2\log_2t+O(\log\log t)$.
\end{corollary}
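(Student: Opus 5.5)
We prove matching upper and lower bounds on $F_q(t)$.

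\emph{Upper bound.} Let $G\subseteq H(m,q)$ be nonempty and $K_t$-minor-free with $d:=d(G)$. We may assume $d\ge d_q$, since otherwise the bound is trivial for large $t$. By the final assertion of Corollary~\ref{cor:Hamming-minors}, $G$ contains a $K_s$-minor with $s\ge c_qd^{-16}q^{d/(2(q-1))}$. Since $G$ has no $K_t$-minor, $s\le t-1<t$. Taking $\log_q$ gives
\[
\frac{d}{2(q-1)}\le \log_q t+16\log_q d+O_q(1).
\]
A first crude use of this inequality shows $d=O_q(\log t)$. Substituting back, $\log_q d=O_q(\log\log t)$, and hence $d\le 2(q-1)\log_qt+O_q(\log\log t)$. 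The $d^{-16}$ loss in Corollary~\ref{cor:Hamming-minors} is exactly what produces the $\log\log t$ error term. This is the reason to use that final assertion rather than a fixed $\alpha<1$: a fixed $\alpha$ would lose a constant factor in the leading term.

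\emph{Lower bound.} Take the full Hamming graph $H(m,q)$ with $m$ chosen as large as possible subject to $H(m,q)$ being $K_t$-minor-free. Its average degree is $m(q-1)$ and it has $q^m$ vertices. The key input is an upper bound on the Hadwiger number of $H(m,q)$. A $K_r$-minor needs at least $\binom r2$ edges between branch sets, so $\binom r2\le e(H(m,q))=m(q-1)q^m/2$, giving $r\le \sqrt{m(q-1)}\,q^{m/2}+1$. Thus $H(m,q)$ is $K_t$-minor-free whenever $\sqrt{m(q-1)}\,q^{m/2}+1<t$. We can therefore choose $m=2\log_qt-O_q(\log\log t)$, which gives $F_q(t)\ge m(q-1)=2(q-1)\log_qt-O_q(\log\log t)$. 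If $m$ is too small for the asymptotics, the bound is still fine for large $t$, and the integrality of $m$ costs only $O_q(1)$ in $d$.

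The main obstacle is the upper bound, and in particular keeping the error at $O_q(\log\log t)$. The lower bound needs only a plain edge count. The upper bound depends on Corollary~\ref{cor:Hamming-minors} having a polynomial loss $d^{-16}$ rather than something exponential in $d$, and on the bootstrap argument to convert $\log_q d$ into $\log\log t$. Combining the two bounds gives $F_q(t)=2(q-1)\log_qt+O_q(\log\log t)$, and the case $q=2$ is the stated special case.
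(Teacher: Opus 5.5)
Your proposal is correct and follows the paper's proof essentially step for step. In the upper bound you take logarithms in the final assertion of Corollary~\ref{cor:Hamming-minors} and use $d=O_q(\log t)$ to turn $\log_q d$ into $O_q(\log\log t)$, and the lower bound is the same edge-count argument on a full Hamming graph of dimension $2\log_q t-O_q(\log\log t)$. The only cosmetic difference is how you get the crude bound $d=O_q(\log t)$: you bootstrap the same inequality, whereas the paper takes it from Corollary~\ref{cor:Hamming-minors} with $\alpha=1/2$; both are valid.
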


\begin{proof}
Fix $q$, and let $G\subseteq H(m,q)$
be $K_t$-minor-free with average degree $d$.
If $d=0$, the desired upper bound is immediate.
Hence assume $d>0$.
Corollary~\ref{cor:Hamming-minors}, applied with
$\alpha=1/2$, first gives $d=O_q(\log t)$.
For sufficiently large $d$, the final assertion
of that corollary gives
$c_qd^{-16}q^{d/(2(q-1))}<t$.
Taking logarithms, we obtain
\[
d<2(q-1)\log_qt
+32(q-1)\log_qd+O_q(1).
\]
Since $d=O_q(\log t)$, it follows that
$d\le2(q-1)\log_qt+O_q(\log\log t)$.
The same bound holds for bounded $d$, proving
the upper bound on $F_q(t)$.

For the lower bound, let $L:=\log_qt$
and choose an integer $a_q\ge1$ such that
$q^{a_q}\ge8(q-1)$.
Let $k:=\lfloor2L-\log_qL-a_q\rfloor$,
which is positive for sufficiently large $t$.
The full Hamming graph $H(k,q)$ has average degree
$(q-1)k$ and
$e(H(k,q))=(q-1)kq^k/2$.
Since $k\le2L$ and
$q^k\le t^2/(q^{a_q}L)$, we have
\[
e(H(k,q))
\le\frac{(q-1)t^2}{q^{a_q}}
\le\frac{t^2}{8}<\binom t2.
\]
As taking minors cannot increase the number of edges,
$H(k,q)$ contains no $K_t$-minor.
Therefore
$F_q(t)\ge(q-1)k
=2(q-1)\log_qt-O_q(\log\log t)$,
completing the proof.
\end{proof}

\subsection{Pseudorandom graphs}

For $0<p\le1$ and $\beta\ge0$, an $n$-vertex
graph $\Gamma$ is called \emph{$(p,\beta)$-jumbled}
if
$\left|e(\Gamma[S])-p|S|^2/2\right|
\le\beta|S|$
for every $S\subseteq V(\Gamma)$.
Krivelevich and Sudakov~\cite[Theorem~4.4]
{KrivelevichSudakov2009} proved that every
$n$-vertex $(p,\beta)$-jumbled graph with
$\beta=o(pn)$ contains a dense minor of average
degree $\Omega(n\sqrt p)$.
Our result shows that the same bound, up to a constant
depending on $\rho$, holds for every spanning subgraph
retaining at least a fixed proportion $\rho>0$ of
the original edges.
Unlike the original graph, these spanning subgraphs
need not be pseudorandom or expanding.
Moreover, the bound remains optimal up to a constant
factor, since taking minors cannot increase the
number of edges.

\begin{corollary}\label{cor:jumbled-spanning-minors}
Let $n\ge2$, $0<p\le1$, and $0<\rho\le1$.
Suppose that $\Gamma$ is an $n$-vertex
$(p,\beta)$-jumbled graph with
$\beta\le\rho pn/8$.
Then every spanning subgraph $G\subseteq\Gamma$
with $d(G)\ge\rho pn$ contains a $J$-minor satisfying
\[
d(J)\ge
\frac{c_{1/2}d(G)}{2\sqrt p}
\ge\frac{c_{1/2}\rho}{2}n\sqrt p.
\]
\end{corollary}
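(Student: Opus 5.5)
We will apply Theorem~\ref{thm:crux} with $\alpha=1/2$ to $G$. This gives a $J$-minor with $d(J)\ge c_{1/2}\sqrt{d(G)\,C_{1/2}(G)}$. It therefore suffices to show
\[
C_{1/2}(G)\ge \frac{d(G)}{4p},
\]
since then $d(J)\ge c_{1/2}\sqrt{d(G)^2/(4p)}=c_{1/2}d(G)/(2\sqrt p)$. The second inequality in the statement follows from $d(G)\ge\rho pn$, because $\rho pn/(2\sqrt p)=(\rho/2)n\sqrt p$.

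To bound the crux, let $H\subseteq G$ be any subgraph with $d(H)\ge d(G)/2$, and put $S:=V(H)$ and $s:=|S|$. Since $H\subseteq G\subseteq\Gamma$, the jumbledness of $\Gamma$ gives
\[
\frac{d(G)s}{4}\le e(H)\le e(\Gamma[S])\le \frac{ps^2}{2}+\beta s.
\]
Dividing by $s$ yields $d(G)/4\le ps/2+\beta$. Using $\beta\le\rho pn/8\le d(G)/8$, which holds because $d(G)\ge\rho pn$, we get $ps/2\ge d(G)/8$, that is, $s\ge d(G)/(4p)$. As $H$ was arbitrary, this gives $C_{1/2}(G)\ge d(G)/(4p)$.

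The only point needing care is that $\beta$ must be absorbed by $d(G)$ rather than by $pn$. This is exactly why the hypothesis $\beta\le\rho pn/8$ is paired with $d(G)\ge\rho pn$. The remaining requirements are routine: $d(G)>0$, needed to apply Theorem~\ref{thm:crux}, holds since $\rho pn>0$, and the constants combine exactly as above.
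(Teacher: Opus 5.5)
Your proposal is correct and follows the paper's proof essentially line for line. You bound $C_{1/2}(G)\ge d(G)/(4p)$ by combining the jumbledness upper bound on $e(\Gamma[V(H)])$ with $\beta\le\rho pn/8\le d(G)/8$, and then apply Theorem~\ref{thm:crux} with $\alpha=1/2$.
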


\begin{proof}
Write $d:=d(G)$.
Let $F\subseteq G$ satisfy $d(F)\ge d/2$,
and put $s:=|F|$.
Since $F\subseteq\Gamma[V(F)]$, we have
$d/2\le d(F)\le
2e(\Gamma[V(F)])/s\le ps+2\beta$.
As $2\beta\le\rho pn/4\le d/4$,
we obtain $s\ge d/(4p)$.
Hence $C_{1/2}(G)\ge d/(4p)$.
Applying Theorem~\ref{thm:crux} with $\alpha=1/2$,
we obtain a minor $J$ satisfying
\[
d(J)\ge c_{1/2}\sqrt{d\,C_{1/2}(G)}
\ge\frac{c_{1/2}d}{2\sqrt p}
\ge\frac{c_{1/2}\rho}{2}n\sqrt p.
\]
\end{proof}

In particular, if $\beta=o(pn)$, then for every fixed
$0<\rho\le1$, every spanning subgraph of $\Gamma$
retaining at least $\rho e(\Gamma)$ edges contains
a dense minor of average degree $\Omega_\rho(n\sqrt p)$.
%Indeed, $d(\Gamma)=(1+o(1))pn$, so such a spanning subgraph has average degree at least $\rho pn/2$ for sufficiently large $n$. The conclusion follows by applying Corollary~\ref{cor:jumbled-spanning-minors} with $\rho/2$ in place of $\rho$.

\section*{Declaration on the use of AI}
During the preparation of this work, the authors used
OpenAI's GPT-6.1 to develop the proof of Theorem~\ref{thm:crux}.
The authors had previously considered the problem
addressed by Theorem~\ref{thm:crux}
and explored some preliminary ideas for almost regular graphs.
After learning of the recent result on edge expansion
by Li, Xu and Zhou~\cite{LiXuZhou}, the authors
provided GPT-6.1 with the relevant paper and their
earlier ideas, and GPT-6.1 generated a proof of the theorem.
The authors subsequently independently verified
the mathematical arguments and associated estimates,
and revised the exposition to improve clarity
and logical coherence.
The authors take full responsibility for the
correctness and presentation of the results.

\appendix

\section{Proof of Lemma~\ref{lem:crux-cheeger}}
\label{app:crux-cheeger}

We first record three results of Li, Xu and Zhou
\cite[Corollary~2.3 and Lemmas~3.1, 3.3]{LiXuZhou}
used in the proof, with notation adapted to this paper.

\begin{lemma}[\cite{LiXuZhou}, Corollary~2.3]
\label{lem:lxz-hitting}
Let $G$ be an $n$-vertex graph with $n\ge2$,
$\Delta(G)\le D$, and $h(G)\ge\sigma D$,
where $D\ge1$ and $\sigma>0$.
Let $q\ge1$ be an integer, and $W_1,\ldots,W_q\subseteq V(G)$ satisfy
$|W_i|\ge w\ge1$ for every $i\in[q]$.
For every nonnegative integer $t$, there exists
a connected set $T\subseteq V(G)$ with
$|T|\le t+1$ which meets at least
\[
q\left(1-\exp\left(
-\frac{\sigma^3wt}{8n}\right)\right)
\]
of the sets $W_1,\ldots,W_q$.
\end{lemma}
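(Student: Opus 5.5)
The plan is to find $T$ as the trace of a random walk and use a hitting-probability estimate from spectral expansion. First regularize: let $P$ be the Markov chain on $V(G)$ that, from $v$, moves to each neighbour with probability $1/(2D)$ and otherwise stays at $v$. It stays with probability $1-d_G(v)/(2D)\ge 1/2$. This chain is symmetric, so it is reversible with uniform stationary distribution $\pi$. It is lazy, so $P$ is positive semidefinite.

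Next I bound the spectral gap. The conductance of $P$ is
\[
\Phi=\min_{0<|X|\le n/2}\frac{e_G(X,V\setminus X)/(2D)}{|X|}\ \ge\ \frac{h(G)}{2D}\ \ge\ \frac{\sigma}{2}.
\]
The Cheeger inequality for reversible chains then gives $1-\lambda_2(P)\ge\Phi^2/2\ge\sigma^2/8$.

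Fix one set $W=W_i$ and let $\mu(W)=|W|/n\ge w/n$. For any $f$ supported on $V\setminus W$, the variance satisfies $\mathrm{Var}_\pi(f)\ge\mu(W)\|f\|_\pi^2$, since $(\mathbb E_\pi f)^2\le(1-\mu(W))\|f\|_\pi^2$ by Cauchy--Schwarz. Hence the Dirichlet form obeys
\[
\langle f,(I-P)f\rangle_\pi\ \ge\ \frac{\sigma^2}{8}\,\mu(W)\,\|f\|_\pi^2 .
\]
Let $\Pi$ be the projection onto functions supported on $V\setminus W$. The operator $\Pi P\Pi$ is positive semidefinite, so this gives $\|\Pi P\Pi\|\le1-\sigma^2 w/(8n)$. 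Start the walk $X_0,X_1,\dots,X_t$ at a uniform vertex. The probability that it avoids $W$ is
\[
\langle \mathbf 1_{V\setminus W},(\Pi P\Pi)^t\mathbf 1_{V\setminus W}\rangle_\pi\ \le\ \Bigl(1-\frac{\sigma^2w}{8n}\Bigr)^t\ \le\ \exp\Bigl(-\frac{\sigma^2wt}{8n}\Bigr).
\]
Finally $\sigma\le h(G)/D\le1$, because each vertex has at most $D$ edges leaving it, so $\sigma^3\le\sigma^2$ and the claimed bound follows.

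Let $T=\{X_0,\dots,X_t\}$. Consecutive vertices of the walk are equal or adjacent, so $G[T]$ is connected and $|T|\le t+1$. By linearity of expectation, the expected number of $i$ with $T\cap W_i\neq\varnothing$ is at least $q\bigl(1-\exp(-\sigma^3wt/(8n))\bigr)$. Some outcome of the walk achieves at least this value, which gives the desired $T$. The case $t=0$ is trivial.

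The main obstacle is the restricted-operator bound $\|\Pi P\Pi\|\le1-\mathrm{gap}\cdot\mu(W)$. It needs positive semidefiniteness, which is why the walk is made lazy, so that the top eigenvalue of $\Pi P\Pi$ controls its norm. It also needs careful constant tracking through Cheeger's inequality. Since the target exponent has the weaker $\sigma^3$, there is slack to absorb any loss in these constants.
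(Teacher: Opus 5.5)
Your proof is correct. The paper gives no argument for this statement: it is quoted from Li, Xu and Zhou (their Corollary~2.3) and used as a black box in Appendix~\ref{app:crux-cheeger}. So your proposal is an independent, self-contained route rather than a reconstruction of something in the text.

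What you prove is the standard expander hitting property for a lazy walk:
\begin{itemize}
\item holding probability at least $1/2$ makes $P$ positive semidefinite;
\item the Jerrum--Sinclair/Lawler--Sokal form of Cheeger's inequality gives $1-\lambda_2(P)\ge\Phi^2/2\ge\sigma^2/8$;
\item the bound $\langle g,Pg\rangle_\pi\le\bigl(1-(1-\lambda_2)\mu(W)\bigr)\|g\|_\pi^2$ for $g$ supported off $W$ gives avoidance probability at most $(1-\sigma^2w/(8n))^t$.
\end{itemize}
The remaining steps check out as well: $\sigma\le h(G)/D\le1$ via a singleton cut (this is where $n\ge2$ enters), connectivity of the trace, and the averaging over outcomes.

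Beyond self-containment, your route gives a stronger exponent, $\sigma^2$ instead of $\sigma^3$. Fed into Appendix~\ref{app:crux-cheeger}, this would let one take $C=1+8\log4/(\sigma^2c)=O(\varepsilon^{-3})$, hence $\theta\xi=\Omega(\varepsilon^7)$. That would improve $\varepsilon^8$ to $\varepsilon^7$ in Lemma~\ref{lem:crux-cheeger}, with the degree threshold $K\varepsilon^{-8}$ lowered to $K\varepsilon^{-7}$.

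One correction to your closing remark. The slack between $\sigma^2$ and $\sigma^3$ is only the factor $1/\sigma\ge1$. This factor equals $1$ in small cases such as $G=K_2$ with $D=1$, so it cannot absorb arbitrary losses in constants. You genuinely need the sharp constant $\Phi^2/2$, with conductance normalized by $\pi(X)$ over $\pi(X)\le1/2$. That is exactly the version you used, so the argument stands.
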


\begin{lemma}[\cite{LiXuZhou}, Lemma~3.1]
\label{lem:lxz-core}
Let $0<\varepsilon\le1$ and $D>0$, and let
$G$ be an $n$-vertex graph with
$\Delta(G)\le D$ and $h(G)\ge\varepsilon D$.
Suppose that $Y,R_0\subseteq V(G)$ are disjoint
and satisfy $|Y|\le\varepsilon^2n/16$,
$|R_0|\le n/2$, and
\[
e_G(R_0,V(G)\setminus(Y\cup R_0))
\le\frac{\varepsilon D}{4}|R_0|.
\]
Then there is a partition
$V(G)=Y\mathbin{\dot\cup}R\mathbin{\dot\cup}U$
such that $R_0\subseteq R$ and
$|R|\le4|Y|/(3\varepsilon)$,
$|U|\ge n/2$, $h(G[U])\ge\varepsilon D/4$,
and $e_G(R,U)\le(\varepsilon D/4)|R|$.

Moreover, if $Z\subseteq U$ and
$|Y\cup Z|\le\varepsilon^2n/16$,
there is a partition
$V(G)=(Y\cup Z)\mathbin{\dot\cup}R'
\mathbin{\dot\cup}U'$
such that $R\subseteq R'$ and
$U'\subseteq U\setminus Z$ and
$|R'|\le4|Y\cup Z|/(3\varepsilon)$,
$|U'|\ge n/2$, $h(G[U'])\ge\varepsilon D/4$,
and $e_G(R',U')\le(\varepsilon D/4)|R'|$.
\end{lemma}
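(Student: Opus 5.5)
The plan is a greedy peeling argument. Start from $R:=R_0$ and $U:=V(G)\setminus(Y\cup R_0)$. As long as there is a set $S\subseteq U$ with $0<|S|\le|U|/2$ and $e_G(S,U\setminus S)<(\varepsilon D/4)|S|$, move $S$ from $U$ into $R$. When the process stops, the stopping rule gives $h(G[U])\ge\varepsilon D/4$ directly.

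\textbf{The cut bound.} Write $R_0,S_1,S_2,\dots$ for the sets added to $R$ in order. Every current $U$ is contained in all earlier versions of $U$. So, as in the proof of Lemma~\ref{lem:crux-extract}(ii), we get
\[
e_G(R,U)\le e_G(R_0,U)+\sum_j e_G(S_j,U\setminus S_j)\le (\varepsilon D/4)|R|,
\]
where the $R_0$ term uses the hypothesis $e_G(R_0,V(G)\setminus(Y\cup R_0))\le(\varepsilon D/4)|R_0|$. This bound holds at every stage of the process, not only at the end.

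\textbf{Size control while $|R|\le n/2$.} At any stage with $|R|\le n/2$, the Cheeger bound gives $e_G(R,V(G)\setminus R)\ge\varepsilon D|R|$. Every edge leaving $R$ goes either to $Y$, contributing at most $D|Y|$ by the maximum degree, or to $U$, contributing at most $(\varepsilon D/4)|R|$. Hence $(3\varepsilon/4)D|R|\le D|Y|$, that is, $|R|\le 4|Y|/(3\varepsilon)\le \varepsilon n/12$. Consequently $|U|\ge n-|Y|-|R|\ge n(1-\varepsilon^2/16-\varepsilon/12)\ge n/2$.

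\textbf{The main obstacle: $R$ never exceeds $n/2$.} The danger is that a single step jumps past $n/2$, since an added $S$ may have size up to $|U|/2$. Suppose the first step at which $|R|>n/2$ adds $S$ to a previous $U_{\mathrm{prev}}$. By the previous paragraph, $|U_{\mathrm{prev}}|\ge n(1-\varepsilon^2/16-\varepsilon/12)$, so the new set satisfies $|U|\ge|U_{\mathrm{prev}}|/2\ge 0.42n$. Since $|R|>n/2$, the complement $Y\cup U$ has size below $n/2$, so expansion applies to it. Together with the cut bound this gives
\[
\varepsilon D(|Y|+|U|)\le e_G(R,Y\cup U)\le D|Y|+(\varepsilon D/4)n.
\]
Therefore $|U|\le |Y|/\varepsilon+n/4\le \varepsilon n/16+n/4<0.32n$, which is a contradiction. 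So $|R|\le n/2$ throughout. The process terminates because $U$ strictly shrinks, and all the claimed properties of the partition $Y\mathbin{\dot\cup}R\mathbin{\dot\cup}U$ hold.

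\textbf{The ``moreover'' part.} Rerun the same procedure with $Y\cup Z$ in place of $Y$ and the current $R$ in place of $R_0$, starting from $U\setminus Z$. The hypotheses are satisfied: $|Y\cup Z|\le\varepsilon^2n/16$, $|R|\le n/2$, and
\[
e_G(R,V(G)\setminus(Y\cup Z\cup R))=e_G(R,U\setminus Z)\le e_G(R,U)\le(\varepsilon D/4)|R|.
\]
The same argument then yields $R'\supseteq R$ and $U'\subseteq U\setminus Z$ with $|R'|\le 4|Y\cup Z|/(3\varepsilon)$, $|U'|\ge n/2$, $h(G[U'])\ge\varepsilon D/4$, and $e_G(R',U')\le(\varepsilon D/4)|R'|$.
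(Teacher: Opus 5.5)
Your proof is correct. The paper gives no proof of this statement; it is quoted from Li, Xu and Zhou. Your argument is the same peeling scheme the paper uses in the proof of Lemma~\ref{lem:crux-extract}(ii), plus the extra overshoot step.

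All the steps check out.
\begin{itemize}
\item In the telescoping cut bound, $e_G(S_j,U\setminus S_j)$ must be read with $U$ the set at the moment $S_j$ was removed.
\item Expansion of $R$ gives $|R|\le4|Y|/(3\varepsilon)$.
\item In the overshoot step, expansion of $Y\cup U$ gives $|U|\le5n/16$, which contradicts $|U|\ge41n/96$.
\item The bound $|U|\ge(1-\varepsilon^2/16-\varepsilon/12)n$ also forces $|U|\ge2$, so $h(G[U])$ is defined.
\end{itemize}
The ``moreover'' part is the first part applied to $(Y\cup Z,R)$ in place of $(Y,R_0)$. This gives $R\subseteq R'$ and $U'\subseteq V(G)\setminus(Y\cup Z\cup R)=U\setminus Z$.
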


\begin{lemma}[\cite{LiXuZhou}, Lemma~3.3]
\label{lem:lxz-family}
For every $0<\varepsilon\le1$, there exist
constants $c=c(\varepsilon)>0, \xi=\xi(\varepsilon)>0,\zeta=\zeta(\varepsilon)>0$, and an integer $D_2=D_2(\varepsilon)$ such that $0<c\le1/32$, $0<\xi\le1/2048$,
$0<\zeta\le\varepsilon$, and the following holds.

Let $G$ be an $n$-vertex graph with
$\Delta(G)\le D$ and $h(G)\ge\varepsilon D$,
where $D\ge D_2$ is an integer.
If $\ell$ is an integer with
$2\le\ell\le n/(2D)$, then there exist
an integer $m\ge1$, pairwise disjoint
nonempty connected sets $A_1,\ldots,A_m$,
and sets $Q_1,\ldots,Q_m$ such that, writing
$X:=\bigcup_{i=1}^m A_i$, the following hold:

\begin{enumerate}[label=(\roman*)]
\item
$\xi n/\ell\le m\le4\xi n/\ell$ and
$|X|\le\varepsilon\zeta n/32$;

\item
$|A_i|\le\ell$,
$Q_i\subseteq N_G(A_i)\setminus X$, and
$|Q_i|\ge cD\ell$ for every $i\in[m]$;

\item
For every $Z\subseteq V(G)$ with
$|Z|\le\zeta n$, fewer than $m/16$
indices $i\in[m]$ satisfy
$|Q_i\setminus Z|<cD\ell$.
\end{enumerate}
\end{lemma}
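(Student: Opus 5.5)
This statement is Lemma~3.3 of~\cite{LiXuZhou}, which we quote rather than reprove. If one wanted to prove it directly, the natural route is a greedy construction of the $A_i$, a pruning step that bounds the overlaps among the $Q_i$, and then (iii) by double counting. Throughout, the constants are chosen in the order $c$, then $\xi\ll c\varepsilon$, then $\zeta$ with $\xi c\ll\zeta\ll\ldots$, with $D_2$ large.

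\textbf{Step 1 (one connected set with a large neighbourhood).} Since $|N(A)|\le D|A|$, the requirement $|N_G(A_i)|\ge cD\ell$ forces $|A_i|\ge c\ell$. It is therefore natural to take sets of size $\Theta(\ell)$ (at most $\ell$) whose external neighbourhood is within a constant factor of the trivial maximum $D|A|$. I would grow $A$ one vertex at a time inside the current expanding graph. At each stage either there is a neighbour $v$ of $A$ that has many (about $\varepsilon D$) neighbours outside $A\cup N(A)$, and we add it, gaining many new neighbours; or no such $v$ exists. In the latter case most edges leaving $N(A)$ return to $N(A)$ or to $A$, so $A\cup N(A)$ has small edge boundary. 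Since $h\ge\varepsilon D$ and $|A\cup N(A)|\le(D+1)\ell\le n/2$, this contradicts the hypothesis $\ell\le n/(2D)$. Turning this dichotomy into a clean potential argument that yields $|N(A)|\ge 2cD\ell$ is the step I expect to be the main technical obstacle.

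\textbf{Step 2 (many disjoint sets while keeping expansion).} Repeat Step~1 $m\approx\xi n/\ell$ times. After each round, delete the sets built so far together with a repair set. To keep the remaining graph expanding, apply Lemma~\ref{lem:lxz-core}, using its ``moreover'' part with $Z$ the union of the $A_i$ built so far, which has size $\le 4\xi n\le\varepsilon^2n/16$. This gives a set $U'$ with $|U'|\ge n/2$, $h(G[U'])\ge\varepsilon D/4$ and small removed part $R'$, so Step~1 applies again inside $G[U']$ with $\varepsilon/4$ in place of $\varepsilon$. Set $Q_i:=N(A_i)\setminus X$. The final removal of $X$ costs each $Q_i$ at most $D|X|$ vertices in total. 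Since $|X|\le m\ell\le 4\xi n$, averaging shows that after discarding a few indices every remaining $Q_i$ keeps size $\ge cD\ell$ (after halving $c$). Choosing $\zeta\ge 128\xi/\varepsilon$ gives $|X|\le\varepsilon\zeta n/32$.

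\textbf{Step 3 (pruning multiplicities, then (iii)).} Each vertex lies in at most $D$ of the sets $Q_i$, because the $A_i$ are disjoint. This trivial bound is too weak for (iii), so prune. We have $\sum_i|Q_i|\le m D\ell\le 4\xi Dn$, so the set $B$ of vertices lying in more than $M:=\varepsilon D/2^{12}$ of the $Q_i$ has size $|B|\le 4\xi Dn/M=O(\xi n/\varepsilon)$. Remove $B$ from every $Q_i$ and discard the indices that lose more than half their $Q_i$; by averaging, as in Step~2, only a small fraction of indices is discarded. Now every vertex lies in at most $M$ sets $Q_i$.

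For (iii), note that
\[
\sum_i|Q_i\cap Z|\le M|Z|\le M\zeta n .
\]
Hence the number of indices with $|Q_i\cap Z|>|Q_i|/2$ is at most $2M\zeta n/(2cD\ell)$, and this is $<m/16$ provided $\zeta\ll c\xi/\varepsilon$. This upper bound on $\zeta$ must be compatible with the lower bound $\zeta\gtrsim\xi/\varepsilon$ from Step~2. I would reconcile them by taking $M$ smaller, of order $c\varepsilon D$ times a small constant, which in turn makes $|B|$ larger; fixing $\xi$ last, small enough relative to $c$ and $\varepsilon$, should make both constraints hold. Checking that this constant chase closes, together with Step~1, is where I would expect the real difficulty to lie.
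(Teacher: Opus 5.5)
Citing the lemma is exactly what the paper does. It imports the statement verbatim from Li, Xu and Zhou and only records the parameters used there: $c=\varepsilon/32$, $\xi=2^{-29}\varepsilon^4$, $\zeta=2^{-22}\varepsilon^3=128\xi/\varepsilon$ and $D_2\le\lceil 2^{29}\varepsilon^{-4}\rceil$. The direct route you sketch, however, has a genuine gap, and it lies in Steps~2 and~3, not in Step~1. Step~1 is routine: while $|A\cup N(A)|\le n/2$, the at least $\varepsilon D|A\cup N(A)|$ edges leaving $A\cup N(A)$ all start in $N(A)$. So some $v\in N(A)$ has at least $\varepsilon D$ neighbours outside $A\cup N(A)$, and adding $v$ gains at least $\varepsilon D-1$ new neighbours.

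In Step~2 you bound the total loss $\sum_i|N(A_i)\cap X|$ by $D|X|$, but the total mass $\sum_i|N(A_i)|$ is itself at most $D|X|$. Since $|N(A_i)|\ge 2cD\ell$ forces $|A_i|\ge 2c\ell$, your count of indices losing more than $cD\ell$ is $D|X|/(cD\ell)\ge 2m$. This certifies nothing, whatever the constants. The pruning in Step~3 fails for the same reason. Removing $B$ costs $\sum_{v\in B}\mathrm{mult}(v)$, and the averaging bound $D|B|\le 4\xi D^2n/M$ exceeds the total mass $\sum_i|Q_i|\ge c\xi Dn$ by the factor $4D/(cM)\gg 1$; indeed $B$ may carry almost all of $\sum_i|Q_i|$. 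Since $|X|$, $m\ell$, $|B|$ and $\sum_i|Q_i|/D$ all scale linearly in $\xi$, fixing $\xi$ last cannot close this. Nor is this mere slack in the estimates. Nothing in your construction forces the $A_i$ apart: when $D\ell\ll\xi n$ they may all be grown inside one well-connected region $W$ with $|W\cup N_G(W)|\le\zeta n$. Then $Z:=W\cup N_G(W)$ violates (iii), and pruning can only shrink the $Q_i$ inside $Z$ or discard them.

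The missing idea is to enforce the multiplicity cap during the construction instead of extracting it afterwards. For instance, at each round put into the set $Z$ of the second part of Lemma~\ref{lem:lxz-core} both the previous $A_j$ and every vertex already lying in $M:=\mu D$ of the earlier sets $P_j:=N_{G[U_j]}(A_j)$. At the end take $Q_i:=P_i\setminus X$.

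\begin{itemize}
\item There are at most $D|X|/M\le 4\xi n/\mu$ saturated vertices, which is admissible for $\mu=\Theta(\varepsilon^2)$ once $\xi\ll\varepsilon^4$.
\item The newly saturated vertices lie in $P_{i-1}\subseteq U_{i-1}$, as that lemma requires.
\item Every vertex now lies in at most $M$ of the $P_j$. Hence the loss to $X$ is at most $M|X|$, and $\sum_i|Q_i\cap Z|\le M\zeta n$ whenever $|Z|\le\zeta n$.
\item With $\mu$ a small multiple of $c\varepsilon$, both averaging arguments go through with $M$ in place of $D$.
\end{itemize}

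One adjustment: $|U_i|$ is only guaranteed to be at least $n/2$, so grow each $A_i$ only to size about $\ell/3$. This keeps $A_i\cup P_i$ below $|U_i|/2$; adjust $c$ accordingly.
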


We also need the following separator theorem of
Kawarabayashi and Reed~\cite{KawarabayashiReed}.

\begin{lemma}[\cite{KawarabayashiReed}, Theorem~1.2]
\label{lem:KR-separator}
There exists an absolute constant $C>0$ such that
every $n$-vertex graph $G$ containing no $K_t$-minor
has a set $S\subseteq V(G)$ with $|S|\le Ct\sqrt n$
such that every component of $G-S$ has at most
$2n/3$ vertices.
\end{lemma}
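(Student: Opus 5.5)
This is the separator theorem of Kawarabayashi and Reed, which the paper imports as a black box. The natural plan is to reduce it to a treewidth bound and then prove that bound by an Alon--Seymour--Thomas type alternating argument, sharpened to lose only a factor $t$ rather than $t^{3/2}$. The reduction is standard. If $\operatorname{tw}(G)\le k$, take a tree decomposition of width $k$, orient each tree edge towards the side holding more than $n/2$ vertices of $G-(\text{adhesion})$, and stop at a sink bag $B$. Then $|B|\le k+1$ and every component of $G-B$ has at most $n/2\le 2n/3$ vertices. So it suffices to show that every $K_t$-minor-free $n$-vertex graph has treewidth $O(t\sqrt n)$. Equivalently, by the bramble/haven duality, there is no $\tfrac23$-balanced-separator-resistant set of size $Ct\sqrt n$.

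\textbf{Step 1 (Alon--Seymour--Thomas scaffolding).} Suppose no separator of size $s:=Ct\sqrt n$ splits $G$ into components of order at most $2n/3$. I would maintain a family $\mathcal A=\{A_1,\dots,A_k\}$, $k<t$, of pairwise touching connected sets, each of size at most $\sqrt n$, forming a $K_k$-model. Let $X:=\bigcup A_i$.
\begin{itemize}
\item If some component of $G-X$ of order more than $2n/3$ meets (is adjacent to) every $A_i$, connect one new vertex of it into a new branch set $A_{k+1}$, using a shortest connecting tree inside that component; Menger arguments bound its size by $O(\sqrt n)$.
\item Otherwise, use Menger's theorem between two of the $A_i$ inside the big component. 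Either a small cut separates them, and replacing some $A_i$ by a smaller piece shrinks $\sum|A_i|$; or many disjoint paths exist, and one of them has length at most $n/(\text{number of paths})$.
\end{itemize}
Since $|X|\le t\sqrt n\le s$, the set $X$ itself is never a valid separator. A potential function argument on $(k,\sum|A_i|)$ then forces termination with $k=t$, contradicting $K_t$-minor-freeness. This version gives $|S|=O(t^{3/2}\sqrt n)$, because connecting a new set to up to $t$ existing ones costs $\sqrt{tn}$ each time.

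\textbf{Step 2 (the improvement, main obstacle).} To remove the extra $\sqrt t$, I would follow Kawarabayashi--Reed. First pass to a highly connected piece: by iteratively splitting along small separations, either a balanced separator of size $O(t\sqrt n)$ arises, or there is a subgraph with no separation of order $\ll t\sqrt n$ cutting off many vertices. Inside this well-linked piece, build the $t$ branch sets simultaneously rather than one by one. Take roughly $t$ disjoint short connected seeds of size $\sqrt n/t\cdot O(1)$, then route pairwise connections using the linkedness. A counting/averaging argument should show that the $\binom t2$ linking paths can be chosen to have total length $O(t\sqrt n)$, so the branch sets stay disjoint after a cleanup. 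The hard part is controlling conflicts when the paths reuse vertices; I would first try iterated Menger augmentation with fractional path packings. Failure of augmentation produces a separator of size $O(t\sqrt n)$, which again contradicts the assumption.

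Since the paper only uses the statement, in the write-up I would cite \cite{KawarabayashiReed} for Step 2. The reduction to treewidth in the first paragraph would be the only part I write out in full.
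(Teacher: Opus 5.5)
The paper gives no proof of this lemma; it imports Theorem~1.2 of \cite{KawarabayashiReed} as a black box, so your plan of citing Kawarabayashi--Reed is the paper's approach (your Step~2 is only a heuristic, and the citation is what actually covers it).

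The parts you propose to write out yourself add nothing, and some are inaccurate. The treewidth reduction is a detour: Kawarabayashi--Reed's theorem is already the separator statement, and an $O(t\sqrt n)$ treewidth bound would itself be deduced from it. The Step~1 sketch is not a faithful Alon--Seymour--Thomas argument, for two reasons. First, a shortest tree in the big component touching all branch sets need not have $O(\sqrt n)$ vertices; their lemma gives such a tree \emph{or} a small set $Z$ after whose removal no component touches all branch sets. Second, branch sets of size $\sqrt n$ are incompatible with connecting trees that cost $\sqrt{tn}$ each.
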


The following lemma is a quantitative version of a result
of Krivelevich and Nenadov~\cite{KrivelevichNenadov}.
Unlike their result, it gives an explicit linear dependence
on $\varepsilon$ and holds for all $n\ge2$.

\begin{lemma}\label{lem:cheeger-sqrtn}
There is an absolute constant $u>0$ such that
the following holds. Let $n\ge2$, $D\ge1$,
and $0<\varepsilon\le1$.
If an $n$-vertex graph $G$ satisfies
$\Delta(G)\le D$ and $h(G)\ge\varepsilon D$,
then $G$ contains a $K_t$-minor for some
integer $t$ with $t-1\ge u\varepsilon\sqrt n$.
\end{lemma}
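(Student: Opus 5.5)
We argue by contradiction using the separator theorem, Lemma~\ref{lem:KR-separator}. Let $t$ be the largest integer such that $G$ contains a $K_t$-minor; then $G$ has no $K_{t+1}$-minor, and the same holds for every subgraph of $G$. The goal is to show $t+1\ge 2u\varepsilon\sqrt n$ for a suitable absolute $u$. From that, $t-1\ge u\varepsilon\sqrt n$ follows whenever $u\varepsilon\sqrt n\ge 2$. In the remaining range $u\varepsilon\sqrt n<2$ it suffices to have $t\ge 2$, which holds once $u\le 1$ and $G$ has an edge. $G$ does have an edge: $h(G)\ge\varepsilon D>0$ and $n\ge2$ force edges leaving every small set. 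So after shrinking $u$ we may assume $\varepsilon\sqrt n$ is large.

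The key step is to iterate separators to cut $G$ into pieces of size at most $n/3$ (say), while deleting few vertices. This is the standard recursive argument. Apply Lemma~\ref{lem:KR-separator} to $G$, then recursively to every component of size greater than $\gamma n$, for a small constant $\gamma$. A component of size $m$ costs $C(t+1)\sqrt m$ deleted vertices. Its pieces shrink geometrically, since each has size at most $2m/3$. Summing over levels, the sizes at level $j$ are at most $(2/3)^j n$ and are disjoint, so there are at most $n/s$ components of size roughly $s$. By Cauchy--Schwarz, the total deletion is $O(C(t+1)\sqrt n\cdot\sum_j \sqrt{(3/2)^j}\,(2/3)^{j})$ restricted to components of size at least $\gamma n$. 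With $\gamma$ fixed this is $O_\gamma((t+1)\sqrt n)$. The result is a set $S$ with $|S|\le K(t+1)\sqrt n$ such that every component of $G-S$ has at most $\gamma n$ vertices, with $\gamma=1/3$ fine and $K$ absolute.

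Next, group the components of $G-S$ greedily into a union $X$ with $n/3-|S|\lesssim |X|\le n/2$. This is possible because each component has at most $n/3$ vertices and the total is $n-|S|$. If $|S|\le n/12$, we get $|X|\ge n/4$ and $|X|\le n/2$. Since $X$ is a union of components of $G-S$, every edge leaving $X$ goes into $S$. Hence $e_G(X,V\setminus X)\le D|S|$. The Cheeger bound gives $e_G(X,V\setminus X)\ge \varepsilon D|X|\ge \varepsilon D n/4$. So $|S|\ge \varepsilon n/4$, that is, $K(t+1)\sqrt n\ge\varepsilon n/4$, giving $t+1\ge \varepsilon\sqrt n/(4K)$. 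If instead $|S|>n/12$, then directly $t+1\ge \sqrt n/(12K)\ge\varepsilon\sqrt n/(12K)$. In both cases the linear dependence on $\varepsilon$ comes out.

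The main obstacle is the recursive separator step: the summation must be done carefully so that the total number of deleted vertices is $O(t\sqrt n)$, rather than picking up an extra $\log n$ factor. I would first try the level-by-level Cauchy--Schwarz bound above. Level $j$ contains at most $(3/2)^j/\gamma'$ relevant components, each of size at most $(2/3)^jn$, so that level contributes $C(t+1)\sqrt n\,(3/2)^{j/2}$, which grows with $j$. The number of levels is only $O(\log(1/\gamma))$, since we stop once pieces fall below $\gamma n$, so the total is still $O_\gamma(t\sqrt n)$. With $\gamma=1/3$ this gives an absolute constant. Everything else is routine bookkeeping.
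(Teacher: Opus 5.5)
Your proof is correct and has the same skeleton as the paper's: the Kawarabayashi--Reed separator combined with the Cheeger bound applied to a union of components. However, the step you single out as the main obstacle is unnecessary.

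The paper applies Lemma~\ref{lem:KR-separator} only once, to $G$ itself with $t:=\max\{2,\lfloor\varepsilon\sqrt n/(24C)\rfloor\}$, and never recurses. Suppose $|S|<\varepsilon n/6\le n/6$ and every component of $G-S$ has at most $2n/3$ vertices. One still gets a union $A$ of components with $n/6\le|A|\le n/2$:
\begin{itemize}
\item if some component has more than $n/2$ vertices, let $A$ be the union of all the others, which has at least $n-|S|-2n/3\ge n/6$ vertices;
\item otherwise every component has at most $n/2$ vertices, and such a union is easy to assemble.
\end{itemize}
Then $\varepsilon D|A|\le D|S|$ gives the contradiction. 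So the ``complement of the giant component'' trick replaces your whole iteration.

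Your iteration is still valid, and with $\gamma=1/3$ it is simpler than your paragraph suggests. Only three rounds occur, splitting at most $1$, $2$, $2$ components of orders at most $n$, $2n/3$, $4n/9$. Hence $|S|\le C(t+1)\sqrt n\,(1+2\sqrt{2/3}+4/3)\le 4C(t+1)\sqrt n$. Your level-by-level Cauchy--Schwarz estimates contradict each other (one decays in $j$, the other grows), but this direct count is all you need.

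Two bookkeeping points.
\begin{itemize}
\item The bound $|X|\gtrsim n/3-|S|$ needs the balanced greedy. Put each component on the currently smaller of two sides and let $X$ be the smaller side; this gives $(n-|S|-n/3)/2\le|X|\le n/2$. The naive ``add components until $|X|\ge n/4$'' can overshoot $n/2$.
\item Your reduction from $t+1\ge2u\varepsilon\sqrt n$ to $t-1\ge u\varepsilon\sqrt n$ misses the range $1<u\varepsilon\sqrt n<2$, where $t\ge2$ is not enough. Proving $t+1\ge3u\varepsilon\sqrt n$ instead (shrinking $u$ slightly) fixes this: if $u\varepsilon\sqrt n>1$ then $t-1\ge3u\varepsilon\sqrt n-2>u\varepsilon\sqrt n$, and if $u\varepsilon\sqrt n\le1$ then $t\ge2$ suffices.
\end{itemize}
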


\begin{proof}
We claim that every set
$S\subseteq V(G)$ for which every component
of $G-S$ has at most $2n/3$ vertices
satisfies $|S|\ge\varepsilon n/6$.
Suppose otherwise that $|S|<\varepsilon n/6\le n/6$.
Since every component of $G-S$ has at most
$2n/3$ vertices, there is a union $A$ of
these components with $n/6\le|A|\le n/2$.
Indeed, if some component has more than
$n/2$ vertices, take the union of the
remaining components; otherwise, such
a union can be chosen greedily.
As $N_G(A)\subseteq S$, we have
$\varepsilon D|A|
\le e_G(A,V(G)\setminus A)
\le D|S|
<\varepsilon Dn/6$,
a contradiction.

Now let $C\ge1$ be the constant in
Lemma~\ref{lem:KR-separator}, and $t:=\max\{2,\lfloor\varepsilon\sqrt n/(24C)\rfloor\}$.
If $t\ge3$ and $G$ is $K_t$-minor-free, there is a set
$S\subseteq V(G)$ with $|S|\le Ct\sqrt n\le\varepsilon n/24$ by Lemma~\ref{lem:KR-separator}, contradicting the claim. Since $h(G)>0$, the graph $G$ contains a $K_2$-minor.
Thus $G$ contains a $K_t$-minor.
Note that $t-1\ge\varepsilon\sqrt n/(96C)$.
Taking $u:=1/(96C)$ completes the proof.
\end{proof}

\begin{proof}[Proof of Lemma~\ref{lem:crux-cheeger}]
Given $0<\varepsilon\le1$ and an $n$-vertex graph $G$ with $n\ge2$, $\Delta(G)\le D$ and $h(G)\ge\varepsilon D$,
we choose constants \[\kappa:=2^{-20},~ p:=\kappa\varepsilon^2,
~c:=\varepsilon/32,~\xi:=p\varepsilon^2/512, ~\text{and}~
\zeta:=p\varepsilon/4.\]
These are precisely the parameters used in the
proof of Lemma~\ref{lem:lxz-family}
in~\cite{LiXuZhou}.
Moreover, the degree threshold $D_2$ in that proof
can be taken to be at most
$\lceil2^{29}\varepsilon^{-4}\rceil$.
Let \[\sigma:=\varepsilon/4,~
C:=1+8\log4/(\sigma^3c),~ \text{and}~
\theta:=1/(100C).\]
Then $C=O(\varepsilon^{-4})$ and
$\theta\xi=\Omega(\varepsilon^8)$.
Fix a sufficiently large absolute constant
$K$ such that $D\ge K\varepsilon^{-8}$
implies $D\ge\lceil2^{29}\varepsilon^{-4}\rceil$,
$cD\ell\ge1$ for every $\ell\ge2$,
and $\theta\xi D\ge1$.

Suppose that $D\ge K\varepsilon^{-8}$
and $n\ge16D$.
Choose $\ell:=\lceil\sqrt{n/D}\rceil$.
Then we have $2\le\ell\le n/(2D)$,
$D\ell^2\ge n$, and $\ell\le2\sqrt{n/D}$.
By Lemma~\ref{lem:lxz-family}, there are
pairwise disjoint connected sets
$A_1,\ldots,A_m$ and sets
$Q_1,\ldots,Q_m$ such that, writing
$X:=\bigcup_{i=1}^m A_i$, we have
$\xi n/\ell\le m\le4\xi n/\ell$,
$|X|\le\varepsilon\zeta n/32$,
and $Q_i\subseteq N_G(A_i)\setminus X$
with $|Q_i|\ge cD\ell$ for every $i\in[m]$.
Moreover, for every $Z\subseteq V(G)$
with $|Z|\le\zeta n$, at least $15m/16$
indices $i$ satisfy
$|Q_i\setminus Z|\ge cD\ell$.

Let $k:=\lfloor\theta m\rfloor$.
Since $m\ge\xi n/\ell\ge2\xi D$ and
$\theta\xi D\ge1$, we have
$\theta m/2\le k\le m/16$.
We construct connected sets $T_1,\ldots,T_k$
inductively, together with auxiliary partitions.
Let $S_0:=\varnothing$ and
$S_j:=\bigcup_{r=1}^j T_r$ for $j\in[k]$.
We claim that, for each $j\in[k]$,
there exist pairwise disjoint connected sets
$T_1,\ldots,T_j$, all disjoint from $X$,
and partitions $\mathcal P_r:
V(G)=(X\cup S_{r-1})
\mathbin{\dot\cup}B_r
\mathbin{\dot\cup}U_r, r\in[j]$,
such that, for every $r\in[j]$,
$T_r\subseteq U_r$, $|T_r|\le C\ell$,
and $T_r$ meets at least $45m/64$
of the sets $Q_i$.
Moreover, $|U_r|\ge n/2$,
$h(G[U_r])\ge\sigma D$,
$e_G(B_r,U_r)\le\sigma D|B_r|$,
$|B_r|\le4|X\cup S_{r-1}|/(3\varepsilon)$,
and $|B_r\cup S_{r-1}|<\zeta n$.

We first prove the claim for $j=1$.
Since $|X|\le\varepsilon\zeta n/32
\le\varepsilon^2n/16$,
applying Lemma~\ref{lem:lxz-core}
with $(Y,R_0)=(X,\varnothing)$
gives a partition
$\mathcal P_1$ satisfying
$|U_1|\ge n/2$,
$h(G[U_1])\ge\sigma D$,
$e_G(B_1,U_1)\le\sigma D|B_1|$,
and
$|B_1|\le4|X|/(3\varepsilon)
\le\zeta n/24<\zeta n$.

We now construct $T_1$ using this partition.
Since $Q_i\cap X=\varnothing$,
we have $Q_i\cap U_1=Q_i\setminus B_1$.
By Lemma~\ref{lem:lxz-family}(iii),
at least $15m/16$ indices $i$ satisfy
$|Q_i\cap U_1|\ge cD\ell$.
Let $i_1,\ldots,i_{q_1}$ be these indices,
where $q_1\ge15m/16$.
Applying Lemma~\ref{lem:lxz-hitting}
to $G[U_1]$ with
$(n,D,\sigma,q,w)
=(|U_1|,D,\sigma,q_1,cD\ell)$,
$W_s=Q_{i_s}\cap U_1$ for $s\in[q_1]$,
and
$t:=\lceil8|U_1|\log4/(\sigma^3cD\ell)\rceil$,
we obtain a connected set $T_1\subseteq U_1$
meeting at least $3q_1/4\ge45m/64$
of the sets $Q_i$.
Moreover,
$|T_1|\le t+1\le Cn/(D\ell)\le C\ell$.
Since $U_1\cap X=\varnothing$,
the set $T_1$ is disjoint from $X$.
This proves the claim for $j=1$.

Now suppose that the claim holds for some
$1\le j<k$.
We first construct the partition
$\mathcal P_{j+1}$.
Since each $T_r$ has size at most $C\ell$,
we have
\[
|S_j|\le jC\ell\le kC\ell
\le4\theta\xi Cn
=\frac{\xi n}{25}
\le\frac{\varepsilon\zeta n}{32}.
\]
Consequently,
$|X\cup S_j|\le\varepsilon\zeta n/16
\le\varepsilon^2n/16$.
Since $T_j\subseteq U_j$,
we may apply the second part of
Lemma~\ref{lem:lxz-core} with
$(Y,R,U,Z)
=(X\cup S_{j-1},B_j,U_j,T_j)$.
Writing $(R',U')=(B_{j+1},U_{j+1})$,
we obtain a partition
$\mathcal P_{j+1}$ such that
$B_j\subseteq B_{j+1}$,
$U_{j+1}\subseteq U_j\setminus T_j$,
$|U_{j+1}|\ge n/2$,
$h(G[U_{j+1}])\ge\sigma D$,
$e_G(B_{j+1},U_{j+1})
\le\sigma D|B_{j+1}|$, and
$|B_{j+1}|
\le4|X\cup S_j|/(3\varepsilon)$.
Furthermore,
$|B_{j+1}\cup S_j|
\le\zeta n/12+\varepsilon\zeta n/32
<\zeta n$.
Thus $\mathcal P_{j+1}$ satisfies
all the required properties.

We next use $\mathcal P_{j+1}$
to construct $T_{j+1}$.
Since $Q_i\cap X=\varnothing$,
we have
$Q_i\cap U_{j+1}
=Q_i\setminus(B_{j+1}\cup S_j)$.
By Lemma~\ref{lem:lxz-family}(iii),
at least $15m/16$ indices $i$ satisfy
$|Q_i\cap U_{j+1}|\ge cD\ell$.
Let $i_1,\ldots,i_{q_{j+1}}$ be these
indices, where $q_{j+1}\ge15m/16$.
Applying Lemma~\ref{lem:lxz-hitting}
to $G[U_{j+1}]$ with
$(n,D,\sigma,q,w)
=(|U_{j+1}|,D,\sigma,q_{j+1},cD\ell)$,
$W_s=Q_{i_s}\cap U_{j+1}$
for $s\in[q_{j+1}]$, and
$t:=\lceil8|U_{j+1}|\log4/
(\sigma^3cD\ell)\rceil$,
we obtain a connected set
$T_{j+1}\subseteq U_{j+1}$
meeting at least
$3q_{j+1}/4\ge45m/64$
of the sets $Q_i$.
Moreover,
$|T_{j+1}|\le t+1
\le Cn/(D\ell)\le C\ell$.
Since $T_{j+1}\subseteq U_{j+1}$
and $U_{j+1}$ is disjoint from
$X\cup S_j$, the sets
$T_1,\ldots,T_{j+1}$ are pairwise
disjoint and disjoint from $X$.
This completes the induction
and yields the desired sets
$T_1,\ldots,T_k$.

Contract the sets $A_1,\ldots,A_m$
and $T_1,\ldots,T_k$ separately,
retaining only edges between the
two families.
Since each $T_j$ meets at least
$45m/64$ of the sets $Q_i$ and
$Q_i\subseteq N_G(A_i)$,
the resulting $J$-minor satisfies $e(J)\ge45mk/64$.
As $k\le m$, $k\ge\theta m/2$,
$m\ge\xi n/\ell$, $\ell\le2\sqrt{n/D}$,
and $\theta\xi=\Omega(\varepsilon^8)$, we obtain
\[
d(J)=\frac{2 e(J)}{m+k}\ge\frac{45}{64}k
\ge\frac{45\theta\xi}{128}\frac{n}{\ell}
\ge c_1\varepsilon^8\sqrt{nD}
\]
for some absolute constant $c_1>0$.

If $D\ge K\varepsilon^{-8}$ but
$n<16D$, then
$\delta(G)\ge h(G)\ge\varepsilon D$.
Taking $J:=G$ gives
$d(J)\ge(\varepsilon/4)\sqrt{nD}
\ge(\varepsilon^8/4)\sqrt{nD}$.

Finally, suppose $D<K\varepsilon^{-8}$.
By Lemma~\ref{lem:cheeger-sqrtn},
$G$ contains a $K_t$-minor with
$t-1\ge u\varepsilon\sqrt n$ for an absolute constant $u>0$.
Taking $J:=K_t$ gives
$d(J)\ge u\varepsilon\sqrt n
\ge(u/\sqrt K)\varepsilon^8\sqrt{nD}$.

Thus, in all three cases, $G$ contains
a $J$-minor satisfying
$d(J)\ge a\varepsilon^8\sqrt{nD}$,
where $a:=\min\{c_1,1/4,u/\sqrt K\}>0$
is an absolute constant.
This completes the proof.
\end{proof}

\end{document}